\newtheorem{assumption}{Assumption}
\def\xE{{\bf x}_E}
\def\x{{\bf x}}
\def\o{\omega}
\newcommand{\norm}[1]{\|#1\|}
\newcommand{\seminorm}[1]{\left|#1\right|}
\newcommand{\abs}[1]{\left|#1\right|}
\newcommand{\normA}[1]{\|#1\|_{a}}
\newcommand{\dx}[1]{\textrm{\, d}#1}
\def\CN{{\mathcal N}}
\def\CL{{\mathcal L}}
\def\R{\mathbb R}
\def\bn{\bm  \nu}
\def\CK{{\mathcal K}}
\def\CL{{\mathcal L}}
\def\CE{{\mathcal E}}
\def\CV{{\mathcal N}}
\def\CS{{\mathcal S}}
\def\sfv{{\mathsf v}}
\def\sfe{{\mathsf e}}
\def\sfm{{\mathsf v_m}}
\def\sfmp{{\mathsf v^{\prime}_m}}
\def\NN{\mathsf N}
\def\RR{\mathsf R}
\def\PP{\mathsf P}
\def\AA{\mathsf A}
\newcommand{\ASSUM}[2]{(\textsf{#1#2})}   \newcommand{\De}[1]{\delta_{e}(#1)} 
\newcommand{\hideit}[1]{}
\newtheorem{theorem}{Theorem}[section]
\newtheorem{lemma}[theorem]{Lemma}
\newtheorem{remark}[theorem]{Remark}
\date{\today}
\title{A two-level method for Mimetic Finite Difference discretizations of elliptic problems}
\author{
Paola F. Antonietti\footnote{MOX, Dipartimento di Matematica, Politecnico di Milano,
  Piazza Leonardo da Vinci 32, 20133, Milano,
  Italy. \texttt{paola.antonietti@polimi.it}},
Marco Verani\footnote{Dipartimento di Matematica, Politecnico di
  Milano, Piazza Leonardo da Vinci 32, 20133, Milano,
  Italy. \texttt{marco.verani@polimi.it}}\ and\ 
Ludmil Zikatanov\footnote{Department of Mathematics, Penn State University, University Park, PA 16802, USA. 
\texttt{ludmil@psu.edu}}~\footnote{Institute of Mathematics and
Informatics, Bulgarian Academy of Sciences, Acad. G. Bonchev str, bl.~8, 1113 Sofia, Bulgaria}
}
\begin{document}

\maketitle 
\noindent {\bf Keywords}: Mimetic finite difference discretizations, two-level preconditioners

%%%%%% Begin Abstract %%%%%%%%%%%
\begin{abstract}
  We propose and analyze a two-level method for mimetic finite
  difference approximations of second order elliptic boundary value
  problems.  We prove that two-level algorithm is uniformly
  convergent, \emph{i.e.}, the number of iterations needed to achieve
  convergence is uniformly bounded independently of the characteristic
  size of the underling partition. We also show that the resulting
scheme provides a uniform preconditioner with respect to the number
  of degrees of freedom. Numerical results that validate the theory
  are also presented.
\end{abstract}

\section{Introduction}

Thanks to its great flexibility in dealing with very general meshes
and its capability of preserving the fundamental properties of the
underlying physical model, the mimetic finite difference (MFD) method
has been successfully employed, in approximately the last ten years,
to solve a wide range of problems.  Mimetic methods for the
discretization of diffusion problems in mixed form are presented in
\cite{HymanShashkovSteinberg_97,HymanShashkov98,BrezziLipnikovShashkov_05,BrezziLipnikovSimoncini_05,BrezziLipnikovShashkov_06,BrezziLipnikovShashkovSimoncini_07}. The
primal form of the MFD method is introduced and analyzed in
\cite{BrezziBuffaLipnikov_08,BeiraodaVeigaLipnikovManzini_2011}.
Convection--diffusion problems are considered in
\cite{CangianiManziniRusso_09,BeiraodaVeigaDroniouManzini_2011}, while
the problem of modeling flows in porous media is addressed
\cite{LipnikovMoultonSvyatskiy_08}. Mimetic discretizations of linear
elasticity and the Stokes equations are presented in
\cite{BeiraoDaVeiga_2010} and
\cite{BeiraodaVeigaGyryaLipnikovManzini_2009,BeiraodaVeigaLipnikovManzini_2010,BeiraodaVeigaLipnikov_2010},
respectively.  MFD methods have been used in the solution of
Reissner-Mindlin plate equations \cite{BeiraodaVeigaMora_2011}, and
electromagnetic
\cite{BrezziBuffa_2010,LipnikovManziniBrezziBuffa_2011} equations.
Numerical techniques to improve further the capabilities of MFD
discretizations such that \emph{a posteriori} error estimators
\cite{BeiraodaVeiga_2008,BeiraodaVeigaManzini_2008,AntoniettiBeiraoLovadinaVerani_2013}
and post-processing techniques \cite{CangianiManzini_08} have been
also developed.  The application of the MFD method to nonlinear
problems (variational inequalities and quasilinear elliptic equations)
and constrained control problems governed by linear elliptic PDEs is
even more recent, see \cite{AntoniettiBeiraoBigoniVerani_2013} for a
review. More precisely, in
\cite{AntoniettiBeiraoVerani_2013,AntoniettiBeiraoVerani_2013b} a MFD
approximation of the obstacle problem, a paradigmatic example of
variational inequality, is considered. The question whether the MFD
method is well suited for the approximation of optimal control
problems governed by linear elliptic equations and quasilinear
elliptic equations is addressed in \cite{AntoniettiBigoniVerani_2012}
and \cite{AntoniettiBigoniVerani_2012b}, respectively. Recently, in
\cite{BeiraodaVeigaBrezziCangianiManziniMariniRusso_2013}, the mimetic
approach has been recast as the \emph{virtual element method} (VEM),
cf. also \cite{BrezziMarini_2013,BeiraoBrezziMarini_2013}.
Nevertheless, the issue of developing efficient solution techniques
for the (linear) systems of equations arising from MFD discretizations
haas not been addressed right now.  The main difficulty in the
development of optimal multilevel solution methods relies on the
construction of consistent coarsening procedures which are non-trivial
on grids formed by more general polyhedra. We refer to
\cite{2012LashukI_VassilevskiP-aa,2008PasciakJ_VassilevskiP-aa,2008LashukI_VassilevskiP-aa}
for recent works on constructing coarse spaces with approximation
properties in the framework of the agglomeration multigrid
method. Very recently, using the techniques of \cite{CGH:2014,
  ASV:2013}, a multigrid algorithm for Discontinuous Galerkin methods
on polygonal and polyhedral meshes has been analyzed in
\cite{AHSV:2014}.
\\
% Iterative solvers for MFD methods are also considered
% in~\cite{BochevHuSiefertTuminaro_2008}.\\

The aim of this paper is to develop an efficient two-level method for
the solution of the linear systems of equations arising from MFD
discretizations of a second order elliptic boundary value problem.  We
prove that the two-level algorithm that rely on the construction of
suitable prolongation operators between a hierarchy of meshes is
uniformly convergent with respect to the characteristic size of the
underling partition. We also show that the resulting scheme provides a
uniform preconditioner, \emph{i.e.}, the number of Preconditioned
Conjugate Gradient (PCG) iterations needed to achieve convergence up
to a (user-defined) tolerance is uniformly bounded independently of
the number of degrees of freedom.  An important observation is that
for unstructured grids a two-level (and
multilevel) method is optimal if the number of nonzeroes in the coarse
grid matrices is under control. This is important for practical
applications and one of the main features of the method proposed here
is that we modify the coarse grid operator so that the number of
nonzeroes in the corresponding coarse grid matrix is under
control. This in turn complicates the analysis of the preconditioner,
since we need to account for the fact that the bilinear form on the
coarse grid is no longer a restriction
of the fine grid bilinear form.\\

The layout of the paper is as follows.  In Section~\ref{sec:mimetic}
we introduce the model problem and its mimetic finite difference
discretization.  The solvability of the discrete problem is discussed
also in this section and further, spectral bounds of the stiffness matrix arising form MFD discretization are provided in Section \ref{sec:CondNum}. Our two-level preconditioners are
described and analyzed in Section \ref{sec:two-level}. Finally, in
Section \ref{sec:numerics} we present numerical results to validate
the theoretical estimates of the previous sections and to test the
practical performance of our algorithms.

\section{Model problem and its mimetic discretization}\label{sec:mimetic}
%%%%%%%%%%%%%%%%%%%%%%%%%%%%%%%%%%%%%%%%%%%%%%%%%%%%%%%%%%%%%%%%%%%%%%%%%%%%%%%%%%%%%%%%%%%%%%%%%%%%%%%%% 
Let $\Omega$ be an open, bounded Lipschitz polygon in $\R^2$.  Using
the standard notation for the Sobolev spaces, we consider the
following variational problem: Find $u \in H^1_0(\Omega)$ such that
\begin{equation}\label{erm2}
  \int_{\Omega} {\kappa(\bm{x})\nabla u \cdot \nabla v \dx{\bm{x}}} =  
  \int_{\Omega}{f~v\dx{\bm{x}}},
  \quad \mbox{for all}\quad v \in H^1_0(\Omega) .
\end{equation}
Here, $f\in L^2(\Omega)$ and we assume that the
function $\kappa(\bm{x})$ is a piecewise constant function, bounded and
strictly positive, namely, there exist $\kappa_{\star},\kappa^{\star}>0$ such
that $\kappa_{\star} \leq \kappa(\bm{x}) \leq \kappa^{\star}$.\\ 

We now briefly review the mimetic discretization method for problem
\eqref{erm2} presented in \cite{Brezzi-Buffa-Lipnikov:08} and extended
to arbitrary polynomial order in
\cite{BeiraoLipnikovManzini:XX}. %Roughly speaking, the mimetic methodis a discretization on a polygonal partition of $\Omega$ whichsatisfies appropriate consistency conditions.  
In the following, to
avoid the proliferation of constants, by $\lesssim$ we denote an upper
bound that holds up to an unspecified positive constant. Moreover, $(\cdot,\cdot)$ will denote the Euclidean scalar
product in $\ell^2(\R^n)$, and $\| \cdot\|$ its induced norm. Finally,  $(\cdot,\cdot)_X$ and
$\|\cdot\|_X$, will denote the inner product and the norm generated
by a symmetric, positive definite matrix $X$, repsectively.\\
% -----------------------------------------------------------------------------------------------------

\subsection{Domain partitioning}

% -----------------------------------------------------------------------------------------------------
We partition $\Omega$ as union of
connected, \emph{convex} polygonal subdomains with non-empty interior. We denote this partition with
$\Omega_H$, and assume it is \emph{conforming},  \emph{i.e.}, the intersection of the closure of two different elements is either empty or is a union of vertices or edges.
Notice that assuming that $\Omega_H$ is made of convex elements is not restrictive
and an algorithm for such decomposition into a small (close to
minimum) number of convex polygons is presented
in~\cite{Chazelle:1984}.
For each polygon $E\in \Omega_H$, $|E|$ denotes its area, $H_E$
denotes its diameter and $H=\max_{E\in\Omega_H}H_E$ is the
characteristic size of the partition $\Omega_h$.  The set of vertices
and edges of the partition is denoted by $\CV_H$ and $\CE_H$,
respectively.  %We further have the set of interior vertices and edges
%$\CV_h^i$ and $\CE_h^i$, and the set of boundary vertexes and edges by
%$\CV_h^\partial$ and $\CE_h^\partial$.  
The vertices and edges of a
particular element $E$ are denoted by $\CV_H^E$ and $\CE_H^E$,
respectively.  
A generic vertex will be denoted by $\sfv$, and a generic edge by $e$. 
%For the length of an edge we will use the symbol $h_e$.  
%A fixed orientation is also set for the mesh $\Omega_h$, which
%is determined by the direction of the unit normal vector $\bn_e$,
%$e\in\CE_h$.  For every polygon $E$ and edge $e\in \CE_h^E$, we define a unit normal vector $\bn_E^e$ that points outside of $E$.s
We also assume that $\Omega_H$ satisfies the following assumptions, cf. ~\cite{Brezzi-Buffa-Lipnikov:08}.
\begin{assumption}\label{ass:mesh}
  There exists an integer number $N_s$, independent of $H$, such that
  any polygon $E\in\Omega_H$ admits a decomposition into
 at most $N_s$ \emph{shape-regular} triangles;
\end{assumption}
Assumption \ref{ass:mesh} implies the  following
properties which we use later, cf.  \cite{Brezzi-Buffa-Lipnikov:08} for more details.
\begin{enumerate}
\item[\ASSUM{M}{1}] The number of vertices and edges of every
  polygon $E$ of $\Omega_H$ is \emph{uniformly} bounded.
\item[\ASSUM{M}{2}] For every  $E\in \Omega_H$ and for every edge $e$ of $E$, it holds $H_E \lesssim |e|$ and $H_E^2 \lesssim |E|$.
\item[\ASSUM{M}{3}] The following \emph{trace inequality} holds
  \begin{equation*}
\begin{aligned}
& \norm{\psi}_{L^2(e)}^2 \lesssim H_E^{-1} \norm{\psi}_{L^2(E)}^2 + H_E
    \seminorm{\psi}_{H^1(E)}^2
&& \forall\, \psi\in H^1(E).
\end{aligned}
  \end{equation*}
\item[\ASSUM{M}{4}] For every $E$ and for every function $\psi \in
  H^m(E)$, $m \in \mathbb{N}$, there exists a polynomial $\psi_k$ of
  degree at most $k$ on $E$ such that
  \begin{equation*}
    \seminorm{\psi-\psi_k}_{H^l(E)} \lesssim H_E^{m-l} \seminorm{\psi}_{H^m(E)}
  \end{equation*}
  for all integers $0 \le l \le m \le k+1$.
\end{enumerate}

We then consider a fine partition $\Omega_h$ obtained after a uniform
refinement of $\Omega_H$, according to the procedure described in Algorithm
\ref{alg:mesh}. 
%%%%%%%%%%%%%%%%%%%% 
% -------MESH REFINEMENT
\begin{algorithm}[!htbp]
  \caption{Refinement algorithm, see Figure \ref{fig:refinement}.}\label{alg:mesh}
  \begin{algorithmic}[1]
    \ForAll{polygons $E \in \Omega_H$} 
    \State Introduce the point $\xE\in E$ defined as
    \begin{equation*}
      \xE = \frac{1}{n_E} \sum_{\sfv\in \CV_H^E} \x(\sfv) \ ,
    \end{equation*}
    where $n_E$ is the number of vertexes $\sfv$ of $E$, and $\x(\sfv)$ is the position vector of the vertex $\sfv$.
    \State Subdivide $E$ of $\Omega_H$ by connecting each midpoint $\sfm=\sfm(e)$ of each edge $e\in\CE_H^E$ with the point $\xE$, see Figure~\ref{fig:refinement}.
    \EndFor
  \end{algorithmic}
\end{algorithm}
%%%%%%%%%%%%%%%%%%%%  
%%%%%%%%%%%%%%%%%%%%%%%%%%% 
\begin{figure}[!htbp]
  \centering
 \includegraphics[width=0.5\textwidth]{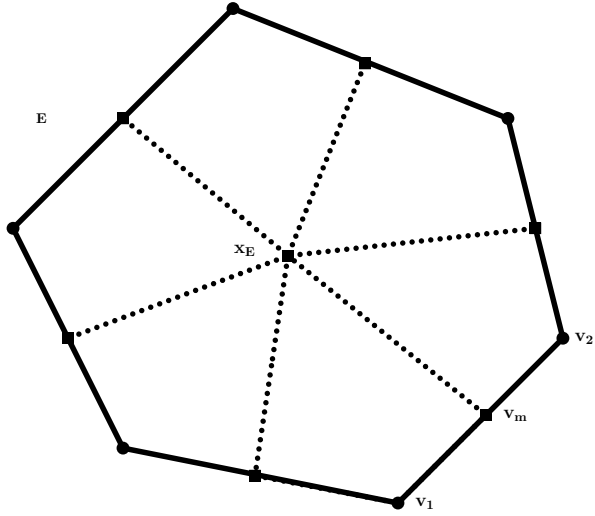}
  \caption{Refinement strategy: a coarse element $E\in \Omega_H$ is subdivided into sub-elements. Circles denote the coarse vertexes in $\CN_H$, while squares refer to additional vertexes in $\CN_h$.}
  \label{fig:refinement}
\end{figure}
%%%%%%%%%%%%%%%%%%%%%%%%%% 
Notice that, by construction, the grid $\Omega_h$ automatically satisfies properties $\ASSUM{M}{1}-\ASSUM{M}{4}$. 
As before, the diameter of an element $E\in \Omega_h$ will be denoted by $h_E$, and we set $h=\max_{E\in\Omega_h}h_E$. Accordingly, $\CV_h$ and $\CE_h$ will denote  the sets of vertices
and edges of $\Omega_h$,
respectively. 
We also observe that, according to Algorithm \ref{alg:mesh}, the edge midpoints
$\sfm(e)$ and the points $\xE$ become additional vertexes in the new
mesh ${\Omega}_{h}$, \emph{i.e.},
\begin{equation}\label{eq:vertex-splitting}
{\CV}_{h} = \CV_H \cup \{ \sfm(e) \}_{e\in\CE_H} \cup \{ \xE
\}_{E\in\Omega_H}.
\end{equation}
% --------------------------------------------------------------------------------------------
Finally, we assume that the jumps in $\kappa(x)$ are aligned with the
finest grid and we denote by $\kappa_E$ the coefficient value in the
polygon $E\in \Omega_h$.

%%{\color{red} 
% --------------------------------------------------------------
\subsection{Mimetic finite difference discretization}\label{sec:bil}
% --------------------------------------------------------------
In this section we describe the MDF approximation to problem \eqref{erm2} on the finest grid $\Omega_h$. We begin by
introducing the discrete approximation space $V_h$: any vector $v_h\in V_h$ is given by $v_h=\{v_h(\sfv)\}_{\sfv\in\CV_h}$, where $v_h(\sfv)$ is a
real number associated to the vertex $\sfv\in\CV_h$.  To enforce
boundary conditions, for all nodes of the mesh which lay on the
boundary we set $v_h(\sfv) = 0$. Denoting by $N_h$ the cardinality of $\CV_h$, we have that $V_h \equiv \R^{N_h}$. \\

%% -----------------------------------------------------------------
The mimetic discretization of
problem \eqref{erm2} reads: Find $u_h \in V_h$ such that
\begin{equation}\label{disc2}
\begin{aligned}
& a_h(u_h,v_h) = (f_h,v_h)
&& \forall v_h \in V_h.
\end{aligned}
\end{equation}
where \begin{equation*}
  (f_h,v_h)=\sum_{E\in\Omega_h} \bar{f}|_E\sum_{\sfv_i  \in \CN_h^E} v_h(\sfv_i) \: \o_E^i,
\end{equation*}
with $\bar{f}|_E$ is the average of $f$ over $E$ and $\o_E^i$ are
positive weights such that $\sum_{i} \o_E^i=|E|$. 
The bilinear form $a_h(\cdot,\cdot): V_h\times V_h \to \R$ is 
defined as follows:
\begin{equation*}
\begin{aligned}
&a_h(v_h,w_h)=\sum_{E\in\Omega_h}  a_{h}^E(v_h,w_h)
&& \forall  v_h, w_h \in V_h,
\end{aligned}
\end{equation*}
where, for each $E \in \Omega_h$, $a_h^E(\cdot,\cdot)$ is a symmetric bilinear form that can be constructed in a simple algebraic way, as shown in \cite{Brezzi-Buffa-Lipnikov:08,AntoniettiBeiraoVerani_2013}. We next recall this algebraic expression and use it to show  that \eqref{disc2} is well posed. 
For any $E\in\Omega_h$ let $n_E$  be the number of its vertexes and let  $\AA^{E}_h\in {\mathbb R}^{n_E\times n_E}$
be the symmetric matrix representing the
local bilinear form $a_{h}^E(\cdot,\cdot)$, \emph{i.e.},
\begin{equation*}
\begin{aligned}
&(\AA^{E}_h v_h, w_h)
=a_{h}^E(v_h,w_h) 
&&\forall v_h , w_h \in
V_h.
\end{aligned}
\end{equation*}
We define 
\begin{equation}\label{eq:matrix_M}
  \AA^{E}_h = \frac{1}{\kappa_E\abs{E}} \RR \RR^T + s \: \PP \ ,
\end{equation}
with $s = \textrm{trace}(\frac{1}{\kappa_E\abs{E}} \RR \RR^T) > 0$  a scaling factor. The matrix $\PP$ is defined as
$\PP = {\mathsf I} - \NN (\NN^T \NN)^{-1}\NN^T$, where \begin{equation} \label{eq:matrix_N}
  \NN =
  \begin{pmatrix}
    1    & x_{1}-\bar{x}_{E}  & y_{1}-\bar{y}_E \\
    1   & x_{2}-\bar{x}_{E}  & y_{2}-\bar{y}_E \\
    1    & x_{3}-\bar{x}_{E}  & y_{3}-\bar{y}_E \\
    \vdots &  \vdots       \\
    1    & x_{n_E}-\bar{x}_{E}  & y_{n_E}-\bar{y}_E \\
  \end{pmatrix},
\end{equation}
being 
$\sfv_1=(x_{1},y_{1}),\ldots,\sfv_{n_E}=(x_{n_E},y_{n_E})$ and $(\bar{x}_{E}, \bar{y}_{E})$ the vertexes and the center of mass of $E$, respectively.
The matrix $\RR$ has the following form
\renewcommand{\arraystretch}{1.5}
\begin{align*} \label{eq:matrix_R}
  \RR  &= \frac{\kappa_E}{2}
  \begin{pmatrix}
    0    &  y_2-y_{n_E}  & x_{n_E}-x_{2}\\
    0    &  y_{3}-y_{1}  &   x_{1}-x_{3}\\
    0    &  y_{4}-y_{2}  & x_{2}-x_{4}\\
    \vdots & \vdots &\vdots\\
    0    &  y_{1}-y_{n_E-1}  & x_{n_E-1}-x_{1}
  \end{pmatrix}.
\end{align*}
Note that, by construction, it holds $\AA^E_h \NN = \RR$. \\

We now prove a result which is basic in showing solvability of the discrete problem. 

\begin{lemma}\label{lm:kerM}
The matrix $\AA^E_h$ is positive semidefinite. Moreover, $\AA^{E}_h z = 0$ if and only if $z=(\alpha,\ldots,\alpha)^T$ for
  some $\alpha\in {\mathbb R}$. 
\end{lemma}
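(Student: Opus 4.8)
My plan is to exploit the additive structure $\AA^{E}_h = \frac{1}{\kappa_E\abs{E}}\RR\RR^T + s\,\PP$ together with the consistency identity $\AA^{E}_h\NN = \RR$. For positive semidefiniteness I would observe that both summands are symmetric and positive semidefinite: $\RR\RR^T$ is a Gram matrix and $\frac{1}{\kappa_E\abs{E}}>0$, while $\PP = \mathsf I - \NN(\NN^T\NN)^{-1}\NN^T$ is the orthogonal projector onto $(\mathrm{range}\,\NN)^\perp$, hence symmetric and idempotent, with $s>0$. A sum of symmetric positive semidefinite matrices is again symmetric positive semidefinite, so $\AA^{E}_h$ is positive semidefinite.

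For the kernel, since $\AA^{E}_h$ is symmetric and positive semidefinite, $\AA^{E}_h z=0$ is equivalent to $z^T\AA^{E}_h z=0$. Using $\PP^2=\PP=\PP^T$, I would write
\[
  z^T\AA^{E}_h z = \tfrac{1}{\kappa_E\abs{E}}\,\norm{\RR^T z}^2 + s\,\norm{\PP z}^2 .
\]
Because both terms are nonnegative and their coefficients are strictly positive, this vanishes if and only if $\RR^T z=0$ and $\PP z=0$ simultaneously. The condition $\PP z=0$ is exactly $z\in\mathrm{range}\,\NN$, i.e. $z=\NN c$ for some $c=(c_1,c_2,c_3)^T\in\R^3$; equivalently, $z_i = c_1 + c_2(x_i-\bar{x}_E) + c_3(y_i-\bar{y}_E)$ is an affine function sampled at the vertices.

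It then remains to impose $\RR^T z=0$ on such $z$, namely $\RR^T\NN c=0$. The crux is therefore the identity
\[
  \RR^T\NN = \kappa_E\abs{E}\,\mathrm{diag}(0,1,1),
\]
which I would verify by a direct computation: the first column of $\RR$ is zero (annihilating the first row and column of the product), the two nontrivial row sums of $\RR$ vanish by telescoping (giving the first column of $\RR^T\NN$), the two off-diagonal entries of the lower block collapse to zero after a cyclic reindexing, and the two diagonal entries reduce, via the shoelace identity $\sum_k (x_k y_{k+1}-x_{k+1}y_k)=2\abs{E}$, to $\kappa_E\abs{E}$. This is the standard mimetic consistency relation and may alternatively be quoted from~\cite{Brezzi-Buffa-Lipnikov:08}. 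Since the lower $2\times 2$ block is nonsingular, $\RR^T\NN c=0$ forces $c_2=c_3=0$, whence $z=c_1\NN e_1 = c_1(1,\dots,1)^T$ is constant. Conversely, every constant vector lies in the kernel, because $\AA^{E}_h(1,\dots,1)^T=\AA^{E}_h\NN e_1 = \RR e_1 = 0$, the first column of $\RR$ being zero. This proves both implications.

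The main obstacle is precisely the matrix identity $\RR^T\NN=\kappa_E\abs{E}\,\mathrm{diag}(0,1,1)$: although each entry reduces to a telescoping or shoelace sum, one must handle the cyclic vertex indexing carefully and check that the orientation convention makes the diagonal entries equal to $\kappa_E\abs{E}$ (up to a global sign for clockwise traversal) rather than $0$, so that the relevant $2\times 2$ block is genuinely invertible. I would also record in passing that $\NN$ has full column rank—the vertices of a nondegenerate polygon are not collinear—so that $\PP$ and the parametrization $z=\NN c$ used above are well defined.
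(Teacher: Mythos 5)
Your proof is correct, but it closes the argument by a different route than the paper. Both proofs start identically: positive semidefiniteness from the splitting $(\AA^{E}_h z,z)=\tfrac{1}{\kappa_E\abs{E}}\|\RR^Tz\|^2+s\|\PP z\|^2$, and then $\AA^{E}_h z=0$ forces $\RR^Tz=0$ and $\PP z=0$, the latter giving $z=\NN c$. The divergence is in how one kills $c_2,c_3$. The paper feeds $z=\NN c$ back through the consistency relation $\AA^{E}_h\NN=\RR$ to get $\RR c=0$, and then argues geometrically: the nonzero columns of $\RR$ pair $(c_2,c_3)$ against (scaled) normal directions, at least two of which are linearly independent for a nondegenerate polygon, hence $c_2=c_3=0$. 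You instead use the \emph{other} piece of information, $\RR^Tz=0$, and compute $\RR^T\NN=\kappa_E\abs{E}\,\mathrm{diag}(0,1,1)$ explicitly via telescoping cyclic sums and the shoelace formula; the invertible lower $2\times 2$ block then forces $c_2=c_3=0$. Your identity is correct (I checked the entries: the first row/column vanish by the zero column of $\RR$ and cyclic telescoping, the off-diagonal entries of the lower block vanish, and the diagonal entries give $2\abs{E}$ times $\kappa_E/2$, up to orientation sign), and it is indeed the standard MFD consistency condition, so quoting it from the literature is also legitimate. What each approach buys: yours is fully algebraic and self-contained, pins down the exact constant $\kappa_E\abs{E}$, and dispenses with the geometric independence argument; the paper's avoids any explicit matrix computation, reusing the already-stated identity $\AA^{E}_h\NN=\RR$ plus a soft geometric fact. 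You also make explicit a point the paper leaves tacit, namely that $\NN$ has full column rank (vertices not collinear) so that $\PP$ and the parametrization $z=\NN c$ are well defined — a worthwhile addition.
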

\begin{proof}
For any $z \in \mathbb{R}^{n_E}$, using that $\PP^2=\PP$ and $\PP^T=\PP$, we have
\begin{equation}\label{eq:AE}
(\AA^{E}_h z, z ) 
= \frac{1}{\kappa_E\abs{E}} (\RR \RR^T  z,  z )+ s (\PP  z,  z )
    =  \frac{1}{\kappa_E\abs{E}} \| \RR^T  z \|^2 +  s \| \PP  z\|^2 \ge 0.
\end{equation}
  We next show that $\AA^{E}_h z = 0$ if and only if $z=(\alpha,\ldots,\alpha)^T$ for
some $\alpha\in {\mathbb R}$.  One direction of the proof is
easy. Indeed, taking $z=(\alpha,\ldots,\alpha)^T$ for $\alpha\in
\mathbb{R}$, then
  \begin{equation*}
z=\NN\begin{pmatrix}\alpha\\0\\0\end{pmatrix},
  \end{equation*}
and hence
 \begin{equation*}
 \AA^{E}_h z= \AA^{E}_h\NN\begin{pmatrix}\alpha\\0\\0\end{pmatrix}=
    \RR \begin{pmatrix}\alpha\\0\\0\end{pmatrix}=0.
  \end{equation*}
To prove the other direction, let us assume that $\AA^{E}_h z=0$. 
Equation \eqref{eq:AE} clearly implies that $\RR^T  z=0$ and $\PP  z=0$.
  From $\PP z=0$,  we conclude that $z\in
  \operatorname{Range}(\NN)$, and, hence, $z = \NN
  \widetilde{z}$ for some $\widetilde{z}=
(\widetilde z_1, \widetilde z_2, \widetilde z_3)^T\in \mathbb{R}^3$. This yields 
  \begin{equation*}
    \RR
    \widetilde{z}= \AA^{E}_h \NN \widetilde{z}=\AA^{E}_h z=0.
  \end{equation*}
  We now want to show that $(\widetilde z_1, \widetilde z_2,
  \widetilde z_3)^T =(\alpha, 0, 0)^T$ for some $\alpha\in\mathbb{R}$.  
   Indeed, denoting by $\bn_E^e$ the unit normal vector to the edge $e$ pointing outside of $E$, the identity $\RR \widetilde{z}= 0$, shows that
  $(\widetilde z_2, \widetilde z_3)^T \cdot \bn_E^{\sfe_{i}}=0$ for
  $i=1,\ldots,n_E$. As at least two of the normal vectors
  $\{\bn_E^{\sfe_{i}}\}_{i=1}^{n_E}$ are linearly independent, this
  implies that $\widetilde z_2 = \widetilde z_3 = 0$.
  Finally, the proof is concluded by setting $\widetilde z_1=\alpha$, $\widetilde z_2
  = \widetilde z_3 = 0$, and computing $\NN \widetilde{z}$ which yields
  $z = \NN \widetilde{z}= (\alpha,\ldots,\alpha)^T$.
To show that $\AA^{E}_h$ is positive definite on the orthogonal
complement of the constant vectors,
we have to show that
\begin{equation*}
(\AA^{E}_h z, z )>0,
\end{equation*} 
for any $z=(u_1, u_2, u_3)^T$ such that $u_1+u_2+u+3=0$. 
For such $z$ we have 
 $\| \RR^T  z \|\neq 0$ and  $ \| \PP  z\|\neq 0$, and, hence,
\eqref{eq:AE} gives 
\begin{equation*}
(\AA^{E}_h z, z ) = \frac{1}{\kappa_E\abs{E}} \| \RR^T  z \|^2 +  s \| \PP  z\|^2>0,
\end{equation*}
and the proof is complete. 
\end{proof}
As a consequence of 
the second part of Lemma \ref{lm:kerM}, setting $a_{ij}^E=(\AA^E_h)_{ij}$,  we immediately get
\begin{equation*}
a^E_{ii} = 
-\sum_{\substack{j=1\\
j\neq i}}^{n_E}a^E_{ij}. 
\end{equation*}
Denoting 
$u_{h,i}=u_h({\mathsf v}_i)$, $v_{h,i}=v_h({\mathsf v}_i)$ for
${\mathsf v}_i\in \CV_h^E$, 
and,   from this identity we have 
% for $z=(z_1,\ldots, z_{n_E})\in \mathbb{R}^{n_E}$ and
% $v=(v_1,\ldots, v_{n_E})\in \mathbb{R}^{n_E}$
% \begin{eqnarray*}
%   \sum_{i,j=1}^{n_E}(-a^E_{ij}) (u_j-u_i)(v_j-v_i) & = &  
%   \sum_{\substack{i,j=1\\
% i\neq j}}^{n_E}(- a^E_{ij}) (u_j-u_i)(v_j-v_i)\\ 
%   &  = & 2\sum_{\substack{j=1\\
% i\neq j}}^{n_E} a^E_{ij} u_jv_i 
% -2\sum_{\substack{j=1\\
% i\neq j}}^{n_E} a^E_{ij} u_iv_i \\
%   & = & 2\sum_{\substack{j=1\\
% j\neq i}}^{n_E} a^E_{ij} u_jv_i + 2\sum_{i=1}^{n_E} a^E_{ii} u_iv_i. 
% \end{eqnarray*}
% From this identity we get that
\begin{equation}\label{bilinear-form-1}
  a_h^E(u_h,v_h) = \frac{1}{2}\sum_{i,j=1}^{n_E} (-a^E_{ij}) (u_{h,i}-u_{h,j}) (v_{h,i}-v_{h,j}).
\end{equation}
%We note that the sum above is over all $(i,j)$, $i=1,\ldots,n_E$ and $j=1,\ldots,n_E$. 

We now introduce (on $E$) a different bilinear form which is spectrally
equivalent to $a_h^E(\cdot,\cdot)$ but the summation is over fewer
edges. We will denote this new bilinear
form with $a^E(\cdot,\cdot)$ and define it as
\begin{equation}\label{bilinear-form-2}
  a^E(u_h,v_h)  = \sum_{E\in\Omega_h} k_E\sum_{e\in \CE_h^E} \frac{|E|}{h_e^{2}}\De{u_h}\De{v_h},
\end{equation}
where, for every $e\in \CE_h$, we set $\De{v_h}= v_h(\sfv) -
v_h(\sfv^{\prime})$ being $\sfv$ and $\sfv^{\prime}$ the two vertices
of the edge $e$. Based on~\eqref{bilinear-form-2}, we define 
\begin{equation}\label{bilinear-form-2-glob}
  a(u_h,v_h)  = \sum_{E\in\Omega_h} a^E(u_h,v_h).
\end{equation}
%The bilinear form $a(\cdot,\cdot)$ corresponds to the linear system
%$Au=f$, where the operator $A:V_h\mapsto V_h$ is defined as 
%\begin{equation}\label{eq:definitionA}
%(A u_h,v_h) =
%a(u_h,v_h).
%\end{equation}  
We have the following result.
\begin{lemma}\label{lm:bilinear-form-equivalence} 
  The bilinear forms $a(\cdot,\cdot)$ and $a_h(\cdot,\cdot)$ are
  spectrally equivalent with constant depending only on the mesh
  geometry.
\end{lemma}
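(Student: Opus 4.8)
The plan is to prove spectral equivalence locally on each element $E\in\Omega_h$ and then sum over all elements, since both $a(\cdot,\cdot)$ and $a_h(\cdot,\cdot)$ decompose as sums of local contributions $a^E(\cdot,\cdot)$ and $a_h^E(\cdot,\cdot)$. Thus it suffices to show that there exist positive constants $c_1,c_2$, depending only on the mesh geometry (through Assumption~\ref{ass:mesh} and the derived properties \ASSUM{M}{1}--\ASSUM{M}{4}), such that
\begin{equation*}
c_1\, a_h^E(v_h,v_h) \le a^E(v_h,v_h) \le c_2\, a_h^E(v_h,v_h)
\qquad\forall\, v_h\in V_h,\ \forall\, E\in\Omega_h.
\end{equation*}

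First I would observe that both local forms vanish precisely on the constant vectors: for $a^E(\cdot,\cdot)$ this is immediate from \eqref{bilinear-form-2}, since $\De{v_h}=0$ for all edges exactly when $v_h$ is constant on $E$; for $a_h^E(\cdot,\cdot)$ this is the content of Lemma~\ref{lm:kerM}. Hence both forms are symmetric positive definite on the same quotient space $V_h^E/\mathrm{constants}$, which has dimension $n_E-1$. On this finite-dimensional space a Rayleigh-quotient argument gives the equivalence, so the real task is to show the equivalence constants can be taken \emph{uniformly} over all elements $E$, independently of $h$.

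The key to uniformity is that, by \ASSUM{M}{1}, the number of vertices $n_E$ is uniformly bounded, and by Assumption~\ref{ass:mesh} every element is the union of at most $N_s$ shape-regular triangles. I would therefore pass to a reference configuration: up to translation and scaling by $H_E$ (equivalently $h_E$), each element falls into a compact family of admissible polygons parametrized by finitely many shape-regular vertex configurations. Both $a^E$ and $a_h^E$ scale the same way under the dilation $\bm x\mapsto \bm x/h_E$ — note from \eqref{eq:matrix_N}, \eqref{eq:matrix_R} that $\RR\RR^T$ scales like $h_E^2$ while $|E|$ scales like $h_E^2$, so $\frac{1}{\kappa_E|E|}\RR\RR^T$ is scale-invariant, and likewise $\frac{|E|}{h_e^2}$ in \eqref{bilinear-form-2} is scale-invariant because $|E|\simeq h_E^2\simeq h_e^2$ by \ASSUM{M}{2} — so the eigenvalues of the generalized problem depend only on the (scale-normalized) shape. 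Since the admissible shapes form a compact set on which both forms are continuous and the generalized eigenvalues are bounded away from $0$ and $\infty$ (they are nonzero on the quotient by the first paragraph's kernel identification), a compactness argument yields uniform $c_1,c_2$.

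The main obstacle I anticipate is establishing this compactness/uniformity rigorously rather than invoking it heuristically: one must confirm that the map from admissible normalized shapes to the smallest and largest generalized eigenvalues is continuous and that the normalized shape parameters indeed range over a compact set, which is where shape-regularity (Assumption~\ref{ass:mesh}) is essential — it prevents edges from degenerating and keeps $\RR$, $\NN$ well-conditioned. An alternative, more constructive route that avoids compactness would be to bound $a^E$ and $a_h^E$ directly against the ``triangle-differences'' energy $\sum_{\sfe}\kappa_E(\De{v_h})^2$ (up to the scale factor $|E|/h_e^2\simeq 1$), using the scaling relations in \ASSUM{M}{2} together with the explicit structure $\AA_h^E=\frac{1}{\kappa_E|E|}\RR\RR^T+s\PP$ to get two-sided bounds with geometry-dependent constants; this trades the compactness argument for explicit, if more tedious, matrix estimates. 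Either way the dependence of the constants on $N_s$ and the shape-regularity parameter is what must be tracked, and stating the equivalence at the elementwise level before summing is what delivers the global claim with constants depending only on the mesh geometry.
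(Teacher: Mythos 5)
Your proposal follows essentially the same route as the paper's proof: localize to each element, use Lemma~\ref{lm:kerM} to see that both $a_h^E(\cdot,\cdot)$ and $a^E(\cdot,\cdot)$ induce norms on the quotient $\mathbb{R}^{n_E}/\mathbb{R}$, observe that the matrix entries of $\AA_h^E$ and the edge weights $k_E|E|/h_e^2$ have the same order with respect to $h_e$ and $|E|$, and then sum over elements with constants controlled by the uniformly bounded number of edges per polygon (Assumption~\ref{ass:mesh}). The reference-configuration/compactness argument you sketch is simply a more careful justification of the uniformity of the equivalence constants, a point the paper dispatches with ``it is easily checked'' and an appeal to the bounded number of edges, so the two arguments coincide in substance.
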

\begin{proof}
  The spectral equivalence is shown first locally on every $E$. By
  Lemma~\ref{lm:kerM} we have that $A_h^E$ is symmetric
  positive semidefinite with one dimensional kernel and therefore,
  $a_h^E(\cdot,\cdot)$ is a norm on $\mathbb{R}^{n_E}/\mathbb{R}$. Same
  holds for $a^E(\cdot,\cdot)$, namely, it also induces a
  norm on $\mathbb{R}^{n_E}/\mathbb{R}$ (as long as the set of edges
  in $E$ forms a connected graph). It is easily checked that the
  entries $(a_{ij}^E)_{i,j=1}^{n_E}$ and the edge weight
  in~\eqref{bilinear-form-2} are the same order with respect to $h_e$
  and $|E|$. Finally, summing up over all elements $E$ concludes the
  proof.  Clearly, the constants of equivalence depend on
  the number of edges of the polygons, which is assumed to be uniformly bounded (see Assumption~\ref{ass:mesh}).
\end{proof}
Lemma \ref{lm:bilinear-form-equivalence}
implies that we can introduce energy norm on $V_h$ via
$a(\cdot,\cdot)$
\begin{equation}\label{def:discrnorm}
  \normA{v_h}^2  = \sum_{E\in\Omega_h} k_E|E|\sum_{e\in \CE_h^E} \frac{|\De{v_h}|^{2}}{h_e^{2}}.
\end{equation}
Thanks to the Dirichlet boundary conditions, the quantity
  $\normA{\cdot}$ is a norm on $V_h$. For Neumann problem, it this
  will be only a seminorm.  We remark that $\normA{\cdot}$ resembles a
  discrete $H^1(\Omega)$ norm; indeed, the quantity
  $h_h^{-1}\De{v_{h}}$ represents the tangential component of the
  gradient on edges and the scalings with respect to $|E|$ and $h_e$
  give an inner product equivalent to the $H^1(\Omega)$
  on standard conforming finite element spaces.
%%} %% end color{red}

%{\color{red}
\subsection{Condition number estimates}
\label{sec:CondNum}
%%%%%%%%%%%%%%%%%%%%%%%%%%%%%%%%%%%%% 
In this section we provide
spectral bounds for the symmetric and positive definite operator  $A_h: V_h \longrightarrow V_h$ 
\begin{equation}\label{eq:MFDoperator}
\begin{aligned}
(A_h u_h,v_h) = a_h(u_h, v_h)  
&& \forall\, u_h, v_h\in V_h.
\end{aligned}
\end{equation}
associated to the MFD bilinear form $a_h(\cdot,\cdot)$.
%%%%
Instead of working directly with $A_h$, it will be easier to work with the operator
\begin{equation}\label{def:L}
\begin{aligned}
  (A_L u_h,v_h) = a_L(u_h, v_h) 
&& \forall u_h,v_h\in V_h,\end{aligned}
\end{equation}
where the graph-Laplacian bilinear form is defined as
\begin{equation*}
\begin{aligned}
  & a_L(u_h,v_h) 
  =\sum_{E\in\Omega_h} \sum_{e\in \CE_h^E} \De{u_{h}} \De{v_{h}}.
\end{aligned}
\end{equation*}
Defining
\begin{equation*}
\begin{aligned}
& \|v_h\|_{a_L}^2 = a_L(v_h,v_h)  
&& \forall v_h\in V_h,
\end{aligned}
\end{equation*}
the following norm equivalence holds.
\begin{lemma}\label{lem:equiv_A_AL}
For any $v_h \in V_h$ it holds
$$
 \|v_h\|_{a_L} \lesssim  \|v_h\|_{a}\lesssim  \|v_h\|_{a_L},
$$
where the hidden constants depend on $\kappa_{\star}$ and $\kappa^{\star}$.
\end{lemma}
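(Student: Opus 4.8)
The plan is to prove the equivalence edge-by-edge, reducing the whole statement to a purely geometric comparison of the two sets of weights. Writing both squared norms as sums over elements and their edges,
\[
  \|v_h\|_{a_L}^2 = \sum_{E\in\Omega_h}\sum_{e\in\CE_h^E} |\De{v_h}|^2,
  \qquad
  \normA{v_h}^2 = \sum_{E\in\Omega_h}\sum_{e\in\CE_h^E} \kappa_E\,\frac{|E|}{h_e^2}\, |\De{v_h}|^2,
\]
I observe that the two expressions are built from the identical increments $|\De{v_h}|^2$ and differ only through the scalar weight $\kappa_E\,|E|/h_e^2$ multiplying each one. Hence it suffices to bound this weight from above and below by constants that are uniform in $h$.

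The key step is the geometric estimate $|E|/h_e^2 \sim 1$. First, since $h_e$ is the length of the edge $e\subset\partial E$, one has $h_e \le h_E$ trivially, while property $\ASSUM{M}{2}$ --- which holds on $\Omega_h$ by construction --- gives the reverse bound $h_E \lesssim h_e$; thus $h_e \sim h_E$. Likewise $|E|\lesssim h_E^2$ holds for any bounded planar set in terms of its diameter, and $\ASSUM{M}{2}$ supplies $h_E^2 \lesssim |E|$, so that $|E|\sim h_E^2$. Combining the two equivalences yields geometric constants $0 < c_1 \le c_2$, depending only on the shape-regularity parameters of the mesh, with
\[
  c_1 \le \frac{|E|}{h_e^2} \le c_2
  \qquad \text{for every } E\in\Omega_h \text{ and } e\in\CE_h^E.
\]

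Multiplying through by $\kappa_E$ and invoking the uniform bounds $\kappa_{\star}\le \kappa_E \le \kappa^{\star}$ on the diffusion coefficient, I then obtain $c_1\kappa_{\star} \le \kappa_E\,|E|/h_e^2 \le c_2\kappa^{\star}$. Applying this pointwise to every term and summing over all $E$ and all $e\in\CE_h^E$ gives
\[
  c_1\kappa_{\star}\, \|v_h\|_{a_L}^2 \le \normA{v_h}^2 \le c_2\kappa^{\star}\, \|v_h\|_{a_L}^2,
\]
which, upon taking square roots, is precisely the asserted equivalence with hidden constants depending on $\kappa_{\star}$ and $\kappa^{\star}$ (and the mesh geometry). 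I expect the geometric estimate of the second paragraph to be the only real obstacle; once $\ASSUM{M}{2}$ is invoked the remainder is a routine term-by-term comparison, with no cancellation or summation subtlety to handle, since both norms decompose over the same edge increments $\De{v_h}$.
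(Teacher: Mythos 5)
Your proof is correct: the paper in fact states Lemma~\ref{lem:equiv_A_AL} without any proof, and your term-by-term comparison of the edge weights---using \ASSUM{M}{2} (which holds on $\Omega_h$ by construction) to get $|E|/h_e^2 \sim 1$, and then $\kappa_{\star}\le\kappa_E\le\kappa^{\star}$ for the coefficient---is exactly the routine argument the authors implicitly rely on. The additional dependence of your constants $c_1, c_2$ on the mesh-regularity parameters is consistent with the paper's convention that $\lesssim$ absorbs geometric constants, so nothing is lost relative to the stated claim.
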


Thanks to Lemma \ref{lm:bilinear-form-equivalence} and Lemma \ref{lem:equiv_A_AL}, 
$A_L$ and $A_h$ are spectrally equivalent, and therefore any spectral bound for the operator $A_L$ also provides a spectral bound for $A_h$.\\

Before stating the main result of this section, we introduce the definition of the Cheeger's constant
associated to the partition $\Omega_h$ (see  \cite{1970CheegerJ-aa}
and 
\cite{1989JerrumM_SinclairA-aa,1984DodziukJ-aa}).  
 Let $\CS$ be a subset of $\CN_h$ and let $\bar{\CS}=\CN_h\setminus \CS$. Denoting by
$\CE(\CS,\bar\CS)$ the set of edges with one endpoint in $\CS$ and
the other in $\bar\CS$, the Cheeger's constant $C_c$ for $\Omega_h$ is
defined as follows 
%{\color{magenta}
\begin{equation}\label{eq:cheeger-constant}
C_c = \frac{1}{2\sqrt{m_d}}\min_{\CS\subset\CN_h} \widetilde{C}_c(\CS), \quad 
\widetilde{C}_c(\CS)  = 
\frac{\vert \CE(\CS,\bar\CS)\vert }{\min(\vert
   \CS\vert,\vert\bar\CS\vert)},\quad
m_d=\max_{\sfv \in \CN_h}|\{e\in \CE_h\;\big\vert\; e\supset \sfv\}|
\end{equation}
where $\vert \CS\vert$ and $\vert \CE(\CS,\bar\CS)\vert$ denote the
cardinality of $\CS$ and $\CE(\CS,\bar\CS)$ and $m_d$ is maximum
number of edges connected to a vertex in the graph (the maximum vertex
degree in the graph given by $\Omega_h$). 
%}
%{\color{magenta} 
The following result provides an estimate of the extremal eigenvalues
of the operator $A_L$ and is a straightforward application of the
results for general graphs given in
% the auxiliary results given in Lemma~\ref{lm:poincare} and
% Lemma~\ref{prop:lower}. The auxiliary results hold in a more general
% setting than that considered here and their proofs follow closely
% the proofs presented in 
\cite[Theorem~2.3]{1984DodziukJ-aa} and \cite[Lemma~3.3]{1989JerrumM_SinclairA-aa}.
%}
\begin{theorem}\label{th:theoremCheeger}
Let $C_c$ be the Cheeger's constant associated with the partition
$\Omega_h$ defined as in \eqref{eq:cheeger-constant}. Then, it holds
  \begin{equation}
\begin{aligned}
&C_c^2 \le \frac{(A_L v_h,v_h)} {(v_h,v_h)}\le m_d && \forall v_h\in V_h.
\end{aligned}
  \end{equation}
\end{theorem}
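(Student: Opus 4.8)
The plan is to recognize that $A_L$ is exactly the (combinatorial) graph Laplacian associated with the graph $G=(\CN_h,\CE_h)$ whose nodes are the vertices of $\Omega_h$ and whose arcs are the mesh edges, so that $(A_L v_h,v_h)=\sum_{E\in\Omega_h}\sum_{e\in\CE_h^E}|\De{v_h}|^2$ coincides, up to the bounded edge multiplicities coming from the double sum over elements (at most two per edge, hence absorbable into the constants), with the standard form $\sum_{e\in\CE_h}|\De{v_h}|^2$. With this identification the two inequalities to be proved are nothing but lower and upper bounds for the extremal eigenvalues of this Laplacian, i.e.\ for the Rayleigh quotient $(A_L v_h,v_h)/(v_h,v_h)$, and these are precisely the quantities controlled by discrete isoperimetric (Cheeger) theory. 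Thus the proof will consist of (i) an elementary degree estimate for the upper bound, and (ii) an application of the discrete Cheeger inequality for the lower bound, after checking that the mesh graph meets the hypotheses of \cite[Theorem~2.3]{1984DodziukJ-aa} and \cite[Lemma~3.3]{1989JerrumM_SinclairA-aa}.

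For the upper bound I would argue directly. Expanding each edge contribution $|\De{v_h}|^2=(v_h(\sfv)-v_h(\sfv'))^2$ and reorganizing the sum over edges into a sum over vertices, each vertex $\sfv$ is charged only by the edges incident to it, so the total weight attached to $v_h(\sfv)^2$ is controlled by the vertex degree, which by definition is at most $m_d=\max_{\sfv}|\{e\in\CE_h : e\supset\sfv\}|$. This is the standard maximum-degree bound for the largest Laplacian eigenvalue and yields the right inequality $(A_L v_h,v_h)\le m_d\,(v_h,v_h)$; note that no use of the boundary condition is needed here.

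The substantive part is the lower bound, which is where the Cheeger constant enters. The idea is to invoke the discrete isoperimetric estimate for the Dirichlet graph Laplacian: the smallest eigenvalue of $A_L$ on $V_h$ is bounded below by a quantity quadratic in the combinatorial isoperimetric constant $\min_{\CS\subset\CN_h}\widetilde C_c(\CS)$, with the maximum degree $m_d$ appearing as a normalizing factor through the edge-boundary ratio $|\CE(\CS,\bar\CS)|/\min(|\CS|,|\bar\CS|)$. Concretely, one verifies that $G$ satisfies the connectivity and degree assumptions required by the cited theorems and then reads off the bound $C_c^2\le (A_L v_h,v_h)/(v_h,v_h)$; the factor $\tfrac{1}{2\sqrt{m_d}}$ built into the definition \eqref{eq:cheeger-constant} of $C_c$ is chosen precisely so that the constants produced by those results collapse into this clean form. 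I expect the only real difficulty to be bookkeeping rather than conceptual: one must carefully reconcile the normalizations and the direction of the inequalities in the two references, and account for the Dirichlet condition encoded in $V_h$ (which removes the constant vector, so that the quotient is bounded away from zero and the minimum over cuts $\CS$ genuinely controls it). Once the definitions are aligned, combining the two bounds gives the claimed two-sided estimate.
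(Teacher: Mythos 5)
Your proposal takes essentially the same route as the paper: the paper offers no self-contained argument, stating only that the theorem is a ``straightforward application'' of \cite[Theorem~2.3]{1984DodziukJ-aa} and \cite[Lemma~3.3]{1989JerrumM_SinclairA-aa}, which is exactly the reduction you carry out --- identify $A_L$ with the mesh-graph Laplacian, get the upper bound from a maximum-degree estimate, and get the lower bound from the discrete Cheeger inequality of the cited references, with the factor $\tfrac{1}{2\sqrt{m_d}}$ in \eqref{eq:cheeger-constant} absorbing their normalizations. The only caveat is that your remark about absorbing the edge multiplicity of the double sum ``into the constants'' is looser than the explicit constant $m_d$ appearing in the statement, but the paper is no more precise on this point than you are.
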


\begin{remark}
  For (mimetic) finite difference or finite element methods we can obtain a
  quantitative estimate of $C_c$. Indeed, for a typical domain in $d$-spatial dimensions we
  have:
\begin{equation*}
C_c= \frac{1}{2\sqrt{m_d}} \min_{\CS\subset N_h} \frac{|\CE(\CS,\bar{\CS})|}{\min (\vert \CS\vert , \vert \bar{\CS} \vert)}
\gtrsim \frac{h^{1-d}}{h^{-d}} \gtrsim h, \quad\mbox{and}\quad
(A_L v_h,v_h)_{\ell^2}\approx h^{2-d}|v_h|_{H^1(\Omega)}.
\end{equation*}
Although these inequalities might be difficult to prove, they are
  reasonable assumptions about a finite element, or (mimetic) finite
  difference meshes. Evidently, the graph corresponding to a uniform mesh on the square/cube
  satisfies these inequalities. It is then straightforward to see that
  in such case, the lower bound is provided by the usual Poincar\'{e}
  inequality for $H_0^1$ functions. Denoting by $v_h$ the function or the vector representing it
%\footnote{in the
%case considered here, this should not lead to any confusion and makes
%the presentation shorter}  
  and rescaling $(v_h,v_h)_{\ell^2}\approx
  h^{-d}\|v_h\|^2_{L^2(\Omega)}$ leads to
\[
\|v_h\|_{L^2(\Omega)}^2 
\lesssim h^{d}(v_h,v_h)_{\ell^2}\lesssim 
h^d C_c^{-2} (A_L v_h,v_h)_{\ell^2} \lesssim 
h^{d-2}h^{2-d}|v_h|^2_{H^1(\Omega)} = |v_h|^2_{H^1(\Omega)}.
\] 
as expected. 
\end{remark}

   %%%%% }

%%%%%%%%%%%%%%%%%%%%%%%%%%%%%%%%%%%% 
%%%%%%%%%%%%%%%%%%%%%%%%%%%%%%%%%%%%%%%%% 
\section{Two-level preconditioners}
\label{sec:two-level}
In this section we provide the construction of uniform two-level
preconditioners for $a(\cdot,\cdot)$ and prove uniform bound on the
condition number of the preconditioned matrix. Thanks to
Lemma~\ref{lm:bilinear-form-equivalence} a uniform preconditioner for
$a(\cdot,\cdot)$ will also provide a uniform preconditioner for
$a_h(\cdot,\cdot)$ (and viceversa).
%We stress again that in this section we construct
%preconditioner for $a(\cdot,\cdot)$, rather than for
%$a_h(\cdot,\cdot)$, although this is not a restriction, since these
%two bilinear forms are equivalent (per
%Lemma~\ref{lm:bilinear-form-equivalence}) and a uniform preconditioner
%for $a(\cdot,\cdot)$ provides uniform preconditioner for
%$a_h(\cdot,\cdot)$ and vice versa.
We observe that the bilinear form $a(\cdot,\cdot)$ can be written in more compact
form,
\begin{equation}\label{eq:graph-laplace}
\begin{aligned}
&a(u_h,v_h) = \sum_{e\in \CE_h} a_e\De {u_h}\De {v_h}
&& \forall \, u_h, v_h\in V_h,
\end{aligned}
\end{equation}
with $a_e= k_E |E|/h_e^2>0$ for any $e\in\CE_h$, cf. \eqref{bilinear-form-2}.\\

%%{\color{red}
Let $\Omega_H$ be the coarse partition that generated the fine grid
through the refinement procedure described in Algorithm
\ref{alg:mesh} and let
$V_H$ be the coarse MFD space.  We introduce the natural inclusion operator $I_H^h: V_H \rightarrow {V}_{h}$, also
known as the prolongation operator, which characterizes the elements
from $V_H$ as elements in $V_h$. Its action corresponds to an
extension of the coarse grid values to the fine grid vertices by
averaging.  Its definition is the following 
\renewcommand{\arraystretch}{1.5}
\begin{equation*}
\begin{aligned}
\big( I_H^h v_H \big) (\sfv) &=
 v_H(\sfv), &&  \mbox{for all}\quad \sfv\in\CV_H, \\
 \big( I_H^h v_H \big) (\sfm(e)) & = \frac{1}{2} \big( v_H(\sfv) + 
v_H(\sfv') \big),&& \mbox{for all $\sfm(e)$, $e \in \CE_H$} \\ 
\big( I_H^h v_H \big) (\bm{x}_E) &= 
\frac{1}{N_E} \sum_{\sfv\in \CV_H^E} v_H(\sfv)  
&& \mbox{for all $E\in \Omega_H$} 
\end{aligned}
\end{equation*}
\renewcommand{\arraystretch}{1}
 where $x_E$ is defined as in Algorithm \ref{alg:mesh} (see
also Figure~\ref{fig:edgeH1}), and  $\sfm(e)$ is the midpoint of the edge $e\in  \CE_H$.
With an abuse of notation, we still denote by $V_H$ the embedded coarse space obtained from the application of the prolongation operator $I_H^h$. With this notation, we have $V_H \subset V_h$, where each element 
$v_H\in V_H$ is a vector of $\mathbb{R}^{\mathcal{N}_h}$ that is uniquely identified once we fix the values  $v_H(\sfv)$ for all $\sfv\in \mathcal{N}_H$ (the other values result from the action of $I_H^h$).
For future use, we introduce the following two operators that will be useful in the sequel. First, we denote by $\Pi_H:{V}_h \rightarrow V_H$ the standard interpolation
operator, namely, for all $v_h\in {V}_h$, the action $\Pi_{H}v_h$ 
is the element of the coarse space $V_H$ which has the same value as
$v_h$ at the coarse grid vertices, namely,
\begin{equation}\label{eq:Pa}
\Pi_H v_h\in V_H, \quad\mbox{and}\quad 
\big( \Pi_{H} v_h \big) (\sfv) = v_h(\sfv) 
\quad\mbox{for all}\quad \sfv\in\CV_H.
\end{equation}
Finally, we introduce the $\ell^2$ orthogonal projection
$Q_H$ onto the space $V_H$, \emph{i.e.},
\begin{equation*}
(Q_H v_h, v_H)=(v_h,  v_H) \quad \forall v_H \in V_H. 
\end{equation*}

%%} %% end color{red}

There are several different norms on $V_h$ that we need to use in the
analysis. One is the energy norm $\normA{\cdot}$ that was already
introduced in~\eqref{def:discrnorm}. Further, if $D$ denotes the
diagonal of $A$, then we introduce the $D$-norm $\|v\|_{D}^2=(Dv_h,v_h)$
for all $v_h\in V_h$. This norm is clearly an analogue of a scaled
$L^2$-norm in finite element analysis. A direct computation shows that
\begin{equation}\label{eq:normD}
(Du_h,v_h) = \sum_{\sfv\in \CV_h}
\left(\sum_{e\in \CE_h: e\supset  \sfv} a_e\right)
u_h(\sfv)v_h(\sfv).
\end{equation}
By Schwarz inequality we easily get the bound
\begin{equation}\label{eq:inverse}
\normA{v_h} \le c_D \|v_h\|_D \quad\mbox{for all}\quad v_h\in V_h,
\end{equation}
and the constant $c_D$, by the Gershgorin theorem, can be taken to
equal the maximum  number of nonzeroes per row in $A$. 
On the coarse grid we introduce 
two types of bilinear forms: 
\begin{enumerate}
\item[\emph{i)}] a restriction of the original form
$a(\cdot,\cdot)$ on $V_H$, denoted by $a_H(\cdot,\cdot):V_H\times
V_H\mapsto \mathbb{R}$;  
\item[\emph{ii)}] a sparser approximation to $a_H(\cdot,\cdot)$, which we denote by 
$b_H(\cdot,\cdot):V_H\times V_H\to\mathbb{R}$. 
\end{enumerate}
The latter bilinear
form is build in the same way \eqref{bilinear-form-2} was built from~\eqref{bilinear-form-1}.
The formal definitions are as follows: 
\begin{equation}\label{defBH}
\begin{aligned}
(A_H u_H,v_H) & = a(u_H,v_H),\\
(B_H u_H,v_H) & = b_H(u_H,v_H)=\sum_{e\in \CE_H} a_{e,H}\De {u_H}\De {v_H}
\end{aligned}
\end{equation}
where $a_{e,H}$ is defined later on. The main reason to introduce the approximate bilinear form
$b_H(\cdot,\cdot)$ defined in~\eqref{defBH} is that this form is much
more suitable for computations because the number of nonzeroes in the
matrix representing $B_H$ has less nonzeroes than in the matrix
representing $A_H$.  To see this, and also to show the spectral
equivalence between $A_H$ and $B_H$, we write the restriction of the
operator $A$ on the coarser space in a way that is more suitable for
our analysis.  First, we split the space of edges $\CE_h$ in subsets
of edges on coarse element boundaries and edges interior to the coarse
elements,
\begin{eqnarray*}
\displaystyle \CE_h = \CE_m\cup \left[\cup_{E\in\Omega_H} \CE_{0,E}\right].
\end{eqnarray*}
Here, $e\in \CE_m$ is a subset of $e_H\in \CE_H$, connecting the
mid point of a coarse edge $e_H$ to the vertices of $e_H$.  Thus,
every $e_H\in \CE_H$ gives two edges in $\CE_m$ or we have
\[
\displaystyle \CE_m=\cup_{e_H\in \CE_H} [e_{H,1}\cup e_{H,2}],\quad\mbox{where}\quad
e_{H,1},\ e_{H,2}\in \CE_h.
\]
Further, for every $E\in \Omega_H$, $\CE_{0,E}$ is the set of edges
connecting the mass center of $E$ with the midpoints of its boundary
edges (see Figure~\ref{fig:edgeH1}).
%%%%%%%
\begin{figure}[!htb]
\centering
\includegraphics*[width=0.45\textwidth]{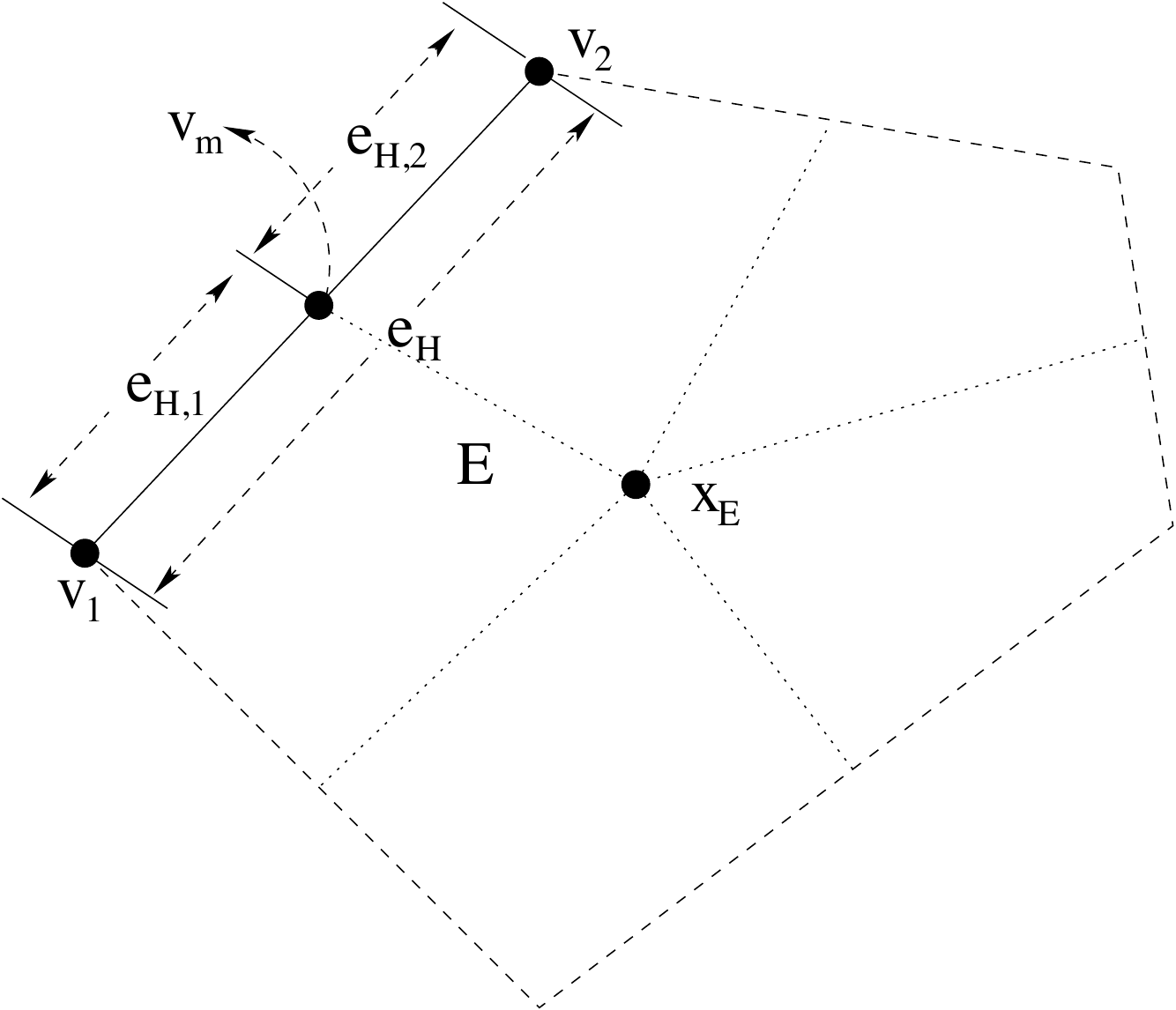}
\caption{A coarse element; boundary and internal edges.} \label{fig:edgeH1}
\end{figure}
%%%%
With this notation in hand, %{\color{red} 
and noticing $\delta_{e_{H,1}} (u_H)=u_H(\sfv_1)-\frac 1
2(u_H(\sfv_1)+ u_H(\sfv_2))=\frac 1 2 (u_H(\sfv_1)-u_H(\sfv_2))$
(analogously for $\delta_{e_{H,2}} $)
%}  
we write the restriction of $A$ on
$V_H$ as follows.
\renewcommand{\arraystretch}{1.5}
\begin{equation}\label{eq:coarse-edges}
\begin{array}{rcl}
a_H(u_H,v_H) &=& \displaystyle
\sum_{e_H\in \CE_H} a_{e_{H,1}}\delta_{e_{H,1}} (u_H)\delta_{e_{H,1}} (v_H)
+a_{e_{H,2}}\delta_{e_{H,2}} (u_H)\delta_{e_{H,2}} (v_H)\\
&& \displaystyle +\sum_{E\in \Omega_H}\sum_{e\in \CE_{0,E}} a_e\delta_e(u_H)\delta_e
(v_H)\\
& = & \displaystyle \frac12\sum_{e\in\CE_H}
{a}_{e,H} 
\delta_e (u_H)\delta_e
(v_H) +\sum_{E\in \Omega_H}\sum_{e\in \CE_{0,E}} a_e\delta_e(u_H)\delta_e
(v_H),
\end{array}
\end{equation}
where {%\color{red} 
${a}_{e,H}=(a_{e_{H,1}}+a_{e_{H,2}})/2$.%}  
In addition, for any fixed element $E\in \Omega_H$, we obtain
\begin{equation}\label{eq:element-center}
\sum_{e\in \CE_{0,E}} a_e\De {u_H}\De {v_H}
= \sum_{e\in \CE_{0,E}} \frac{1}{n_E}\sum_{e'\in \CE_{0,E}} 
a_e (u_{H}(\sfm)-u_{H}(\sfmp)) (v_H(\sfm)-v_H(\sfmp))
\end{equation}
where we denote by $\sfmp$ %{\color{red} 
the midpoint that coincides with one of the  endpoint of
$e^{\prime}\in \mathcal{E}_{0,E}$.
%}
This identity follows from the fact that each of $u_H(\bm{x}_E)$ is an
average of vertex values which is actually equal to the average of
midpoint values for $u_H\in V_H$ and $v_H\in V_H$.  
The (symmetrized) two--grid iteration method computes for any given
initial iterate $u^0$ a two--grid iterate $u^{TG}$ as described in
Algorithm \ref{alg:tg_method} where $R$ denotes a suitable smoothing operator.
%%
%%%%%%%%%%%%%%%%%%%%
%-------TWO-LEVEL
\begin{algorithm}[!htbp]
\caption{Two-level algorithm: $u^{TG}\leftarrow u^0 $}
\label{alg:tg_method}
\begin{algorithmic}[1]
\State \emph{Pre-smoothing:} $v=u^0 + R^T(f-A u^0)$; 
\State \emph{Coarse-grid correction:} $e_H=B^{-1}_H Q_H(f-A v),\qquad w= v+e_H$;
\State \emph{Post-smoothing:}  $u^{TG} =w + R(f-Aw)$.
\end{algorithmic}
\end{algorithm}
%%%%%%%%%%%%%%%%%%%%
The error propagation operator $E$ associated with this algorithm 
satisfies the relation
\begin{equation*}
E=(I-RA) (I-B_H^{-1} Q_H A)(I-R^TA).
\end{equation*}
%\todo{Do we need the operators $T_1=I-R_h A_h$ and $T_2=B_HQ_HA_h$?}
A usual situation is when $E$ is a uniform contraction in
$\normA{\cdot}$-norm. This is definitely the case when $B_H=A_H$.
A proof of this fact follows the same lines as the proof for the case
$B_H\neq A_H$ which we present below. In the case $B_H=A_H$ the
operator $E$ is a contraction because $(I-A_H^{-1} Q_H A)$ is an $A$-orthogonal
projection and therefore non-expansive in $\|\cdot\|_A$-norm and, in
addition, $(I-RA)$ is a contraction in $\|\cdot\|_A$ norm. 

However, when the coarse grid matrix is approximated, \emph{i.e.} we have $B_H\neq
A_H$, then the error propagation operator does not have to be a
contraction and we aim to bound the condition number of the
preconditioned system. In order to do this, we consider the 
% show {\color{blue} such} bounds on the
% condition number of the system preconditioned by the 
explicit form of the two-level MFD preconditioner given by $B^{-1} =
(I-E)A^{-1}$, namely,
%\todo{Ludmil: can you check this?? $\widetilde{R}$ is not defined yet}

\begin{equation}\label{eq:preconditioner}
B^{-1}=\underbrace{R+R^T-R^TAR}_{\widetilde{R}} + (I-AR^T) B_H^{-1} Q_H (I-RA).
\end{equation}
The operator $\widetilde{R} = R+R^T -R^TAR$ is often referred to as the
symmetrization of $R$.

As is well known (see~\cite[pp. 67-68]{2008VassilevskiP-aa} and
\cite{2013HuX_WuS_WuX_XuJ_ZhangC_ZhangS_ZikatanovL-aa}),
if $\|I-RA\|_A<1$ then $\widetilde{R}$ is symmetric positive definite,
and, hence the preconditioner $B$ is symmetric and positive
definite.  Such statement also follows from the canonical form of the
multiplicative preconditioner as given in~\cite[Theorem~3.15,
pp. 68-69]{2008VassilevskiP-aa} and \cite{2008ChoD_XuJ_ZikatanovL-aa}.
\begin {theorem}[Theorem 3.15 in \cite{2008VassilevskiP-aa}]
The following identity holds for the two level preconditioner $B$, given
by~\eqref{eq:preconditioner}
\begin{equation}\label{eq:canonical}
  (Bv,v) = \min_{v_H\in V_H} \left(\|v_H\|^2_{B_H} + \|v-(I-R^TA)v_H\|^2_{\widetilde{R}^{-1}}\right). 
\end{equation}
\end{theorem}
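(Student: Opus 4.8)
The plan is to evaluate the right-hand side of \eqref{eq:canonical} explicitly as a quadratic form in $v$ and then to recognize the outcome as $(Bv,v)$ by means of the Sherman--Morrison--Woodbury identity. Throughout I write $\Sigma = I-R^TA$, I denote by $\iota:V_H\hookrightarrow V_h$ the natural inclusion, and I use that $Q_H$, being the $\ell^2$-orthogonal projection onto $V_H$, is exactly the $\ell^2$-adjoint of $\iota$, so that $(g,v_H)=(Q_Hg,v_H)$ for every $g\in V_h$ and every $v_H\in V_H$. I also set $W=\widetilde R^{-1}$, which is symmetric positive definite since $\|I-RA\|_A<1$ guarantees that $\widetilde R$ is symmetric positive definite, and I abbreviate $U=\Sigma\iota:V_H\to V_h$, so that $Uv_H=(I-R^TA)v_H$ and $U^T=Q_H\Sigma^T$. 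With this notation the coarse-correction term in \eqref{eq:preconditioner} reads $\Sigma Q_HB_H^{-1}Q_H\Sigma^T=UB_H^{-1}U^T$, and the preconditioner takes the compact form $B^{-1}=W^{-1}+UB_H^{-1}U^T$.

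First I would fix $v\in V_h$ and expand the functional to be minimized. Using $W^T=W$ and the adjoint relation $U^T=Q_H\Sigma^T$ to move every projection onto $V_H$, I obtain
\[
J(v_H):=\|v_H\|_{B_H}^2+\|v-Uv_H\|_{\widetilde R^{-1}}^2
=\big((B_H+U^TWU)v_H,v_H\big)-2\,(U^TWv,v_H)+(Wv,v).
\]
Since $B_H$ is symmetric positive definite on $V_H$ and $U^TWU$ is symmetric positive semidefinite, the operator $\mathcal M:=B_H+U^TWU$ is symmetric positive definite on $V_H$; hence $J$ is a strictly convex, coercive quadratic that attains its minimum at the unique point $v_H^\star=\mathcal M^{-1}U^TWv$.

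Next I would substitute $v_H^\star$ back into $J$. By the standard formula for the minimum of a quadratic, and again using $W^T=W$ together with the adjointness of $U$ and $U^T$,
\[
\min_{v_H\in V_H}J(v_H)=(Wv,v)-(\mathcal M^{-1}U^TWv,\,U^TWv)
=\big((W-WU\mathcal M^{-1}U^TW)v,\,v\big),
\]
so that the right-hand side of \eqref{eq:canonical} equals $(\widehat Bv,v)$ with $\widehat B=W-WU(B_H+U^TWU)^{-1}U^TW$. It then remains to show that $\widehat B=B$, i.e. that $\widehat B^{-1}=W^{-1}+UB_H^{-1}U^T$. This is precisely the Sherman--Morrison--Woodbury identity applied with $P=W^{-1}$, $X=U$ and $Y=B_H$, which gives $(W^{-1}+UB_H^{-1}U^T)^{-1}=W-WU(B_H+U^TWU)^{-1}U^TW=\widehat B$. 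Inverting both sides yields $\widehat B=(W^{-1}+UB_H^{-1}U^T)^{-1}=B$ and finishes the argument.

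The only genuinely delicate point is the bookkeeping of the projection $Q_H$ and the adjoints: one must consistently identify $Q_H$ with $\iota^T$ in order to pass between $V_h$ and $V_H$, so that the sandwiched coarse operator $UB_H^{-1}U^T=\Sigma Q_HB_H^{-1}Q_H\Sigma^T$ is matched exactly with the coarse-correction term in \eqref{eq:preconditioner}, and so that this term is seen to be symmetric (a consequence of $Q_HB_H^{-1}Q_H$ being symmetric). Once the compact form $B^{-1}=W^{-1}+UB_H^{-1}U^T$ is secured, the remaining computation is routine and the Sherman--Morrison--Woodbury step carries all the weight.
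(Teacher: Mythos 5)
Your overall strategy---expanding the right-hand side of \eqref{eq:canonical} as a quadratic in $v_H$, minimizing it via the Schur complement, and identifying the resulting operator through the Sherman--Morrison--Woodbury formula---is sound, and it is essentially the standard proof of the cited result (the paper itself offers no proof, only the reference to Vassilevski's book, so there is nothing to compare line by line). The expansion of $J(v_H)$, the formula for its minimum, and the Woodbury step $\bigl(W^{-1}+UB_H^{-1}U^T\bigr)^{-1}=W-WU\bigl(B_H+U^TWU\bigr)^{-1}U^TW$ are all correct, as is the identification $Q_H=\iota^T$ under the subspace viewpoint.

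The gap is in the very first identification, which you yourself single out as ``the only genuinely delicate point'': you assert that the coarse-correction term of \eqref{eq:preconditioner} equals $UB_H^{-1}U^T$ with $U=(I-R^TA)\iota$, i.e.\ $(I-R^TA)\,\iota B_H^{-1}Q_H\,(I-AR)$. But \eqref{eq:preconditioner} as written contains $(I-AR^T)\,B_H^{-1}Q_H\,(I-RA)$, and $(I-AR^T)\iota\neq(I-R^TA)\iota$ unless $R=R^T$ (or $A$ commutes with $R^T$); both operators are symmetric, but they are different. This is not a vacuous distinction here, since the paper's smoothers of interest are nonsymmetric---Gauss--Seidel and SOR---and Assumption~\ref{as:smoother-1} is stated precisely for the case $R\neq R^T$. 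So what your argument actually establishes is that \eqref{eq:canonical} holds for the operator $\widetilde{B}^{-1}=\widetilde{R}+(I-R^TA)\,\iota B_H^{-1}Q_H\,(I-AR)$, not for the operator literally defined in \eqref{eq:preconditioner}. In fairness, your computation exposes a transpose inconsistency in the paper itself: the error propagation operator of Algorithm~\ref{alg:tg_method} gives $B^{-1}=(I-E)A^{-1}=(R+R^T-RAR^T)+(I-RA)\,B_H^{-1}Q_H\,(I-AR^T)$, which is yet a third variant, and the three expressions coincide only for symmetric $R$; moreover, of the three, it is exactly \emph{your} compact form (which corresponds to pre-smoothing with $R$ and post-smoothing with $R^T$) that makes the identity \eqref{eq:canonical}, with its factor $(I-R^TA)v_H$ and weight $\widetilde{R}=R+R^T-R^TAR$, a true statement. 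A complete proof must therefore either restrict to $R=R^T$, or explicitly correct the transposes in \eqref{eq:preconditioner} (equivalently, in \eqref{eq:canonical}) so that the preconditioner you start from and the canonical form you end with refer to the same operator; silently equating $(I-R^TA)\iota$ with $(I-AR^T)\iota$, as the proposal does, is exactly the point at which the argument fails.
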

What we will do next is to use this theorem and derive spectral
equivalence results for  $B$ and $A$. 

\subsection{Spectral equivalence results}\label{S:equiv}
In this section we prove that the preconditioner given by the
multiplicative two level MFD algorithm is spectrally equivalent to the
operator $A$.

For the smoother $R$ we assume that it is nonsingular operator and
convergent in $\normA{\cdot}$-norm, that is, 
$$
\normA{I-RA}^2\le
1-\delta_R<1.
$$ 
This implies that the operator $D_R=(R^{-1}+R^{-T}-A)$ is symmetric and
positive definite and also the so called symmetrizations of $R$,
namely $\widetilde{R}=R^TD_RR$ and $\widetilde{R}=RD_RR^T$ are also
symmetric and positive definite. Denoting with $D$ the diagonal of
$A$, we make the following assumptions:
\begin{assumption}\label{as:smoother-1}
  We assume that in the case of nonsymmetric smoother, $R\neq R^T$,
  the following inequality holds with $D_R=(R^{-1}+R^{-T}-A)$ and $D$,
  the diagonal of $A$:
$$(D_Rv,v)\lesssim (D v,v).$$
\end{assumption}
 % We note that we made this assumption only for a nonsymmetric smoother
 % $R\neq R^T$. 
\begin{assumption}\label{as:smoother-2} Let $\widetilde{R}$ be the
  symmetrization of $R$ and $D$ let be the diagonal of $A$. We assume that
$$
(Dv,v)\eqsim (\widetilde{R}^{-1} v,v) 
$$
\end{assumption}

Assumption~\ref{as:smoother-1} obviously holds for a (damped) Jacobi smoother and is
easily verified for Gauss-Seidel or SOR smoother.  For example, in the
case of Gauss-Seidel smoother we have $D_R=D$ and for SOR method with
relaxation parameter $\omega\in (0,2)$ we have
$D_R=\frac{2-\omega}{\omega}D$.
 Assumption~\ref{as:smoother-2} is also a typical assumption in the multigrid methods
 (see~\cite{1985HackbuschW-aa}, \cite{1993BrambleJ-aa}) and is easily verified for Gauss-Seidel method, SOR or
Schwarz smoothers (see~\cite{Zikatanov:2008,2008VassilevskiP-aa}), and also for
polynomial smoothers as well
(see~\cite{2012KrausJ_VassilevskiP_ZikatanovL-aa}).

%%%%%%%%%%%%%%%%%%%%%%%%%%%%%%%%%%%%%%%%%%%%%%%%
%
% In order to prove the convergence of the above two-level algorithm we
% will resort to the abstract theory of subspace correction methods (see
% \cite{Xu:1992} and also \cite{Xu-Zikatanov:2002, Zikatanov:2008}).  In
% particular, 

%\subsection{Coarse grid approximation properties} 

To study the spectral equivalence between the preconditioner defined
by the two level method and $A$ we need some auxiliary results
which are the subject of the next two Lemmas. 
\begin{lemma}\label{lm:basic}
For every $v_h\in V_h$ we have
\begin{equation}
\| v_h - \Pi_H v_h \|^2_{D}\lesssim \normA{v_h}^2.
\end{equation}
\end{lemma}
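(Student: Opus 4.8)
The plan is to exploit the fact that the error $w_h := v_h - \Pi_H v_h$ vanishes at every coarse vertex, so that, by the explicit formula \eqref{eq:normD} for the $D$-norm, only the fine vertices that are \emph{not} coarse vertices can contribute. By \eqref{eq:vertex-splitting} these are precisely the edge midpoints $\sfm(e)$, $e\in\CE_H$, and the element centers $\xE$, $E\in\Omega_H$. I would therefore start from
\begin{equation*}
\| w_h \|_D^2 = \sum_{\sfv\in\CV_h}\Big(\sum_{e\in\CE_h:\,e\supset\sfv} a_e\Big)\, |w_h(\sfv)|^2
\end{equation*}
and observe that the only surviving terms are those over midpoints and centers, which I bound separately against the energy $\normA{v_h}^2 = \sum_{e\in\CE_h} a_e|\De{v_h}|^2$ of \eqref{eq:graph-laplace}.

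For the midpoint contribution, let $e\in\CE_H$ have coarse endpoints $\sfv,\sfv'$ and fine half-edges $e_{H,1}=\edge{\sfv}{\sfm(e)}$, $e_{H,2}=\edge{\sfm(e)}{\sfv'}$. Since $(\Pi_H v_h)(\sfm(e))=\tfrac12(v_h(\sfv)+v_h(\sfv'))$, I write
\begin{equation*}
w_h(\sfm(e)) = v_h(\sfm(e))-\tfrac12\big(v_h(\sfv)+v_h(\sfv')\big) = \tfrac12\big(\delta_{e_{H,1}}(v_h)+\delta_{e_{H,2}}(v_h)\big),
\end{equation*}
so $|w_h(\sfm(e))|^2 \le \tfrac12(|\delta_{e_{H,1}}(v_h)|^2+|\delta_{e_{H,2}}(v_h)|^2)$ by convexity. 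The prefactor $\sum_{e'\supset\sfm(e)} a_{e'}$ is a sum of a uniformly bounded number of fine-edge weights (Assumption~\ref{ass:mesh}, \ASSUM{M}{1}), all comparable: by \ASSUM{M}{2} and the shape-regular refinement every adjacent fine edge satisfies $a_{e'} = \kappa_{E'}|E'|/h_{e'}^2 \eqsim \kappa_{E'}$, and the contrast $\kappa_{E'}$ lies in $[\kappa_\star,\kappa^\star]$. Hence this prefactor is $\lesssim a_{e_{H,1}}+a_{e_{H,2}}$ and the midpoint sum is controlled by the energy carried on the half-edges.

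The element-center contribution is the crux, since $\xE$ is not adjacent on the fine grid to any coarse vertex; it is joined only to the midpoints $\sfm(e)$, $e\in\CE_H^E$, through the interior edges $\CE_{0,E}$. The key algebraic observation is that, because each vertex of the convex polygon $E$ belongs to exactly two of its $n_E$ edges (so $n_E$ is both the number of edges and of vertices), averaging the midpoint-averages over the boundary edges reproduces the vertex average:
\begin{equation*}
\frac{1}{n_E}\sum_{e\in\CE_H^E}\tfrac12\big(v_h(\sfv)+v_h(\sfv')\big) = \frac{1}{n_E}\sum_{\sfv\in\CV_H^E} v_h(\sfv) = (\Pi_H v_h)(\xE).
\end{equation*}
Combining this with the telescoping identity $v_h(\xE)-\tfrac12(v_h(\sfv)+v_h(\sfv')) = \delta_{\edge{\sfm(e)}{\xE}}(v_h)+w_h(\sfm(e))$, I express $w_h(\xE)$ as the average over $e\in\CE_H^E$ of $\delta_{\edge{\sfm(e)}{\xE}}(v_h)+w_h(\sfm(e))$ and apply Cauchy--Schwarz to get
\begin{equation*}
|w_h(\xE)|^2 \lesssim \frac{1}{n_E}\sum_{e\in\CE_H^E}\Big(|\delta_{\edge{\sfm(e)}{\xE}}(v_h)|^2 + |w_h(\sfm(e))|^2\Big).
\end{equation*}
The midpoint deviations on the right are already bounded by half-edge energies, while the center-to-midpoint differences are exactly the energy contributions of the edges in $\CE_{0,E}$; the prefactor $\sum_{e'\supset\xE} a_{e'}$ is again a bounded number of comparable interior-edge weights.

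Summing both estimates over all midpoints and centers, each fine edge is charged only a uniformly bounded number of times — a half-edge of $e$ appears for $\sfm(e)$ and for the centers of the at most two coarse elements sharing $e$, and each interior edge appears only for its own center — so bounded-overlap counting yields $\| w_h \|_D^2 \lesssim \sum_{e\in\CE_h} a_e|\De{v_h}|^2 = \normA{v_h}^2$, with constants depending only on the mesh regularity and on $\kappa^\star/\kappa_\star$. I expect the center estimate, and specifically the incidence identity that lets me rewrite $w_h(\xE)$ through the midpoint quantities, to be the main obstacle; the remainder is convexity together with the weight-comparability bookkeeping.
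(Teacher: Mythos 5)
Your proof is correct and follows essentially the same route as the paper's: with $w_h:=v_h-\Pi_H v_h$, which vanishes at coarse vertices, you split $\|w_h\|_D^2$ over edge midpoints and element centers, express each value of $w_h$ through fine-edge differences, and finish with Cauchy--Schwarz and bounded-overlap counting of comparable edge weights. One remark: your identity $w_h(\xE)=\frac{1}{n_E}\sum_{e\in\CE_H^E}\bigl(\delta_{\edge{\sfm(e)}{\xE}}(v_h)+w_h(\sfm(e))\bigr)$ is in fact more careful than the paper's corresponding step, which asserts $w_h(\xE)=\frac{1}{n_E}\sum_{e\in\CE_{0,E}}\De{v_h}$ --- an identity that holds only when the vertex average equals the midpoint average (true for embedded coarse functions but not for general $v_h\in V_h$); your additional $w_h(\sfm(e))$ terms, which the midpoint estimate absorbs, repair this harmless slip in the paper's own argument.
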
 
\begin{proof}
For $v_h\in V_h$ we have that
\begin{eqnarray}
\big( v_h- \Pi_H v_h \big) (\sfm) &=& 
v_h( \sfm) - \frac{1}{2} \big( v_h(\sfv) + v_h(\sfv') \big) \nonumber \\
 &=&\frac{1}{2} \big( v_h(\sfm) - v_h(\sfv))+ 
\frac{1}{2}(v_h(\sfm) - v_h(\sfv') \big). \label{eq:1}
\end{eqnarray}
Analogously, we obtain 
\begin{eqnarray}
\big( v_h- \Pi_H v_h  \big) (\xE) &=& v_h( \xE) - \frac{1}{n_E}
%{\color{red} 
\sum_{\sfv\in \CN_H^E}
%} 
\Pi_H v_h  (\sfv)  \nonumber\\
 &=& %{\color{red} 
\sum_{\sfv\in \CN_H^E}
%} 
\frac{1}{n_E}(v_h(\x_E)- v_h(\sfv))\nonumber\\ 
 &=& %%{\color{red} 
\sum_{e\in \mathcal{E}_{0,E}} \frac{1}{n_E}\delta_e (v_h)
%}
\nonumber. \label{eq:2}
 \end{eqnarray}
Next, we use \eqref{eq:1}-\eqref{eq:2} and the definition of $\|\cdot\|_D$
given in~\eqref{eq:normD}.  Splitting the sum over $\sfv\in\CV_h$ in accordance
with~\eqref{eq:vertex-splitting} into: (1) a sum
over the midpoints of coarse edges; and (2) sum over mass centers of
coarse elements; and recalling that $\big(v_h - \Pi_H v_h \big) (\sfv)=0$ for
$\sfv\in\CV_H$ then gives 
\begin{eqnarray}
\| v_h -  \Pi_H v_h  \|^2_D&=& \sum_{\sfv\in \CV_h}
\left(\sum_{e\in \CE_h;\sfv \in e }
  a_e\right)\lbrack(v- \Pi_H v_h )(\sfv)\rbrack^2 \nonumber\\
&=& \frac{1}{2}\sum_{e_H\in\CE_H} (a_{e_{H,1}}+a_{e_{H,1}})(\delta_{e_{H,1}} (v_h)+\delta_{e_{H,2}} (v_h))^2
\nonumber\\
&& + \sum_{E\in\Omega_H} \frac{1}{n_E}\left(\sum_{e'\in \CE_{0,E}}
a_{e'}\right) \sum_{e\in \CE_{0,E}} \lbrack\De {v_h}\rbrack^2 \\
&\lesssim & \normA{v_h}^2. \nonumber
\end{eqnarray}
The proof is complete. 
% Hence, using \ASSUM{S}{1} we have
% \begin{equation}
%  \frac{ h^{-2} \| v - v_H \|^2_{0,h}}{\| v \|^2_{1,h}}\lesssim 1.
% \end{equation}
% To conclude, it is sufficient to prove that $\lambda_{0,1}=h^{-2}$. Indeed, there holds
% \begin{eqnarray}
% \| v\|^2_{1,h} &=& \sum_{E'\in\Omega_h} \vert E' \vert 
% \sum_{\sfe\in\partial E'} \frac{\big( v(\sfv_2)-v(\sfv_1)\big)^2}{\vert e \vert^2}\nonumber\\
% &\lesssim& h^{-2}  \sum_{E\in\Omega_h}\abs{E} \sum_{\sfv\in\CV_h^E} v^2({\sfv}) ,
% \nonumber 
% \end{eqnarray} 
% that is  $\| v\|^2_{1,h} \lesssim h^{-2} \| v	\|^2_{0,h}$.
\end{proof}

%\subsection{Spectral equivalence result}

\begin{lemma}\label{lm:basic-2} 
The following
  inequalities hold 
\begin{enumerate}[(i)]
\item $\normA{\Pi_H v_h}
\lesssim\normA{v_h}$;
\item $(A v_h, v_h) \le (\widetilde{R}^{-1}v_h, v_h)$;
\item $(R\widetilde{R}^{-1}R^TAv_h,Av_h)\lesssim\normA{v_h}$;
\item $ (B_Hv_H, v_H) \lesssim (A_H v_H, v_H) \lesssim (B_Hv_H, v_H)$.
\end{enumerate}
\end{lemma}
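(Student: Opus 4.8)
The four bounds fall into two groups: parts (i)--(ii) follow quickly from results already in hand, while (iii)--(iv) carry the genuine algebraic content. I would treat them in the order (ii), (i), (iii), (iv). For (ii), the plan is to exploit the factorization of the symmetrized smoother. Writing $\widetilde{R}=R+R^T-R^TAR=R^TD_RR$ with $D_R=R^{-1}+R^{-T}-A$ (as recorded just before Assumption~\ref{as:smoother-1}), one verifies the operator identity $I-\widetilde{R}A=(I-R^TA)(I-RA)$. Since $(I-R^TA)$ is the adjoint of $(I-RA)$ in the $A$-inner product, the right-hand side is $A$-self-adjoint and $A$-positive semidefinite, with $A$-norm at most $\normA{I-RA}^2\le 1-\delta_R<1$. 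Hence $0\preceq_A\widetilde{R}A\preceq_A I$ in the $A$-inner product, and the upper bound $\widetilde{R}A\preceq_A I$, after substituting $v=A^{-1}w$, becomes $\widetilde{R}\preceq A^{-1}$, that is $(Av_h,v_h)\le(\widetilde{R}^{-1}v_h,v_h)$.

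Part (i) is then a short triangle-inequality argument. Writing $\normA{\Pi_H v_h}\le\normA{\Pi_H v_h-v_h}+\normA{v_h}$, I would bound the first term by the inverse inequality~\eqref{eq:inverse}, namely $\normA{\Pi_H v_h-v_h}\le c_D\|v_h-\Pi_H v_h\|_D$, and then invoke Lemma~\ref{lm:basic}, which gives exactly $\|v_h-\Pi_H v_h\|_D^2\lesssim\normA{v_h}^2$. Combining the two yields $\normA{\Pi_H v_h}\lesssim\normA{v_h}$.

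For (iii) the key observation is again algebraic: from $\widetilde{R}=R^TD_RR$ one gets $R\widetilde{R}^{-1}R^T=R\,(R^{-1}D_R^{-1}R^{-T})\,R^T=D_R^{-1}$, so that $(R\widetilde{R}^{-1}R^TAv_h,Av_h)=(D_R^{-1}Av_h,Av_h)$ (and the right-hand side of the stated inequality should read $\normA{v_h}^2$). For the smoothers covered by Assumptions~\ref{as:smoother-1}--\ref{as:smoother-2} -- and the damped-Jacobi, Gauss--Seidel, and SOR examples listed there, for which $D_R$ is a scalar multiple of the diagonal $D$ -- one has $D_R$ spectrally equivalent to $D$, whence $(D_R^{-1}Av_h,Av_h)\lesssim(D^{-1}Av_h,Av_h)$. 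Finally the inverse inequality~\eqref{eq:inverse} in the form $A\preceq c_D^2D$ gives, upon inversion, $(D^{-1}Av_h,Av_h)\le c_D^2(Av_h,v_h)=c_D^2\normA{v_h}^2$, which closes the estimate.

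Part (iv) is the most structural, and I expect the element-local equivalence to be the main obstacle. The lower bound $(B_Hv_H,v_H)\lesssim(A_Hv_H,v_H)$ is immediate from the decomposition~\eqref{eq:coarse-edges}: there $a_H(v_H,v_H)$ equals $\tfrac12 b_H(v_H,v_H)$ plus the nonnegative interior-edge sum $\sum_{E\in\Omega_H}\sum_{e\in\CE_{0,E}}a_e\,\delta_e(v_H)^2\ge 0$, so $a_H\ge\tfrac12 b_H$. The reverse bound $(A_Hv_H,v_H)\lesssim(B_Hv_H,v_H)$ requires controlling the interior-edge contribution by $b_H$. Here I would argue element by element, rewriting the interior sum via~\eqref{eq:element-center} as differences of midpoint values, and then use that on each coarse element both $a_H^E(\cdot,\cdot)$ and $b_H^E(\cdot,\cdot)$ are norms on $\mathbb{R}^{n_E}/\mathbb{R}$ (the boundary edges of $E$ form a connected cycle, so $b_H^E$ vanishes only on constants). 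As in the proof of Lemma~\ref{lm:bilinear-form-equivalence}, two norms on this finite-dimensional quotient are equivalent with a constant depending only on $n_E$, which is uniformly bounded by Assumption~\ref{ass:mesh}; summing over $E\in\Omega_H$ completes the estimate. The delicate point is precisely this local equivalence: one must check that the midpoint and mass-center degrees of freedom carried by $V_H$ are genuinely slaved to the coarse vertex values, so that both forms act on the same quotient space modulo constants.
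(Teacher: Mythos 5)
Your proof is correct and follows essentially the same route as the paper in all four parts: (i) triangle inequality with \eqref{eq:inverse} and Lemma~\ref{lm:basic}; (ii) the identity $I-\widetilde{R}A=(I-R^{T}A)(I-RA)$ followed by inversion; (iii) the identity $R\widetilde{R}^{-1}R^{T}=D_R^{-1}$ together with $\rho(D^{-1/2}AD^{-1/2})\lesssim 1$; (iv) the decomposition \eqref{eq:coarse-edges}--\eqref{eq:element-center} plus finite-dimensional norm equivalence as in Lemma~\ref{lm:bilinear-form-equivalence}. Your two side observations also sharpen the paper's own writeup: the right-hand side of (iii) should indeed read $\normA{v_h}^{2}$, and the step $(D_R^{-1}Av_h,Av_h)\lesssim(D^{-1}Av_h,Av_h)$ requires the direction $D\lesssim D_R$ (which your spectral-equivalence hypothesis supplies, and which holds for the listed smoothers), whereas Assumption~\ref{as:smoother-1} as literally stated gives only the opposite bound $D_R\lesssim D$.
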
 
\begin{proof}
We prove (i) by using the inequality~\eqref{eq:inverse} and the approximation
property proved in Lemma~\ref{lm:basic}
\begin{eqnarray*}
\normA{\Pi_H v_h}&\le& \normA{v_h-\Pi_H v_h} + \normA{v_h} \\
&\lesssim& \|v_h- \Pi_H v_h\|_D + \normA{v_h} \lesssim\normA{v_h}.
\end{eqnarray*}
 
The proof of (ii) follows from the following implications
\begin{eqnarray*}
&&0\le \|(I-RA)v_h\|_A^2\Longrightarrow 0\le ((I-\widetilde{R}A)v_h,v_h)_A \Longrightarrow\\
&& (\widetilde{R}Av_h,Av_h)\le (Av_h,v_h) \Longrightarrow  
(A^{1/2}\widetilde{R}A^{1/2}v_h,v_h)\le (v_h,v_h) \Longrightarrow\\
&& (v_h,v_h)\le (A^{-1/2}\widetilde{R}^{-1}A^{-1/2}v_h,v_h) \Longrightarrow
(Av_h,v_h)\le (\widetilde{R}^{-1}v_h,v_h).
\end{eqnarray*}

Item (iii) follows from Assumption~\ref{as:smoother-1} and 
  its proof is as follows:

\begin{eqnarray*}
(R\widetilde{R}^{-1}R^TAv_h,v_h)_A & = & (D^{-1}_RAv_h,Av_h) 
\le (A^{1/2}D^{-1}A^{1/2}w_h,w_h) \\ 
&\le& \rho(A^{1/2}D^{-1}A^{1/2})(w_h,w_h)  \\
&=&
\rho(D^{-1/2}AD^{-1/2})\|v_h\|^2_A
\lesssim \|v_h\|_A^2.
\end{eqnarray*}

Finally, (iv) follows by using the formulae given in
\eqref{eq:element-center} and \eqref{eq:coarse-edges} and proceeding
as in the proof or Lemma~\ref{lm:bilinear-form-equivalence}. Note that
to prove the spectral equivalence we need to only estimate the second
term on the right side of \eqref{eq:coarse-edges} (or equivalently the
term on the right side of \eqref{eq:element-center}). This is
straightforward using the fact that all norms in a finite dimensional
space are equivalent. 
\end{proof}
In the proof we used~\eqref{eq:element-center}
and~\eqref{eq:coarse-edges} to show that $a_H(\cdot,\cdot)$ and
$b_H(\cdot,\cdot)$ are equivalent. We remark that to achieve that, the
coefficients $a_{e,H}$ of the coarse grid bilinear form
$b_H(\cdot,\cdot)$ in~\eqref{defBH} can be all set
to one. Then the equivalence constants in Lemma~\ref{lm:basic-2} will
depend on the variations in the coefficient $k(x)$. However, other
choices are also possible. One such choice is minimizing the Frobenius
norm of the difference of the local matrices for $b_H(\cdot,\cdot)$
and $a_H(\cdot,\cdot)$. For more details on such approximations that
use the so called edge matrices we refer to~\cite{2007KrausJ-aa}.\\

\begin{remark}
In special cases, the proof of Lemma~\ref{lm:basic-2}(iii) can be done
without using Assumption~\ref{as:smoother-1}.
This is in case the smoother is symmetric  i.e., $R=R^T$ and
$\rho(RA)<1$. Such $R$ could be a symmetrization of a
$A$-norm convergent non-symmetric smoother or just can be a properly scaled symmetric
smoother. Examples, satisfying these assumptions, are the 
symmetric Gauss-Seidel method and the damped Jacobi method with sufficiently
large damping factor (e.g. $R=\frac{1}{\|D^{-1}A\|_{\ell^1}}D^{-1}$). In such cases, we have with
$X=A^{1/2}RA^{1/2}$ and $w_h=A^{1/2}v_h$:
\begin{eqnarray*}
(R\widetilde{R}^{-1}R^TAv_h,v_h)_A & = & ((2I-X)^{-1}Xw_h,w_h) \le
(w_h,w_h)=\|v_h\|_A^2.
\end{eqnarray*}
We used above that $\|X\|=\rho(A^{1/2}RA^{1/2}) = \rho(RA) <1$, or equivalently that $\rho(RA)<1$ and that $\frac{t}{2-t}\in [0,1]$ for  
$t\in [0,1]$. This proves Lemma~\ref{lm:basic-2}(iii) in such special
cases. 
\end{remark}

We are now ready to prove the following uniform preconditioning
result that is obtained using the canonical representation for $B$ given
in~\eqref{eq:canonical}.
\begin{theorem} The condition number of $BA$, $\kappa(BA)$, satisfies
\[
\kappa(BA)\lesssim 1 
\]
\end{theorem}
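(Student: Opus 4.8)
The plan is to use the canonical representation \eqref{eq:canonical} to establish the two-sided spectral equivalence
\[
c_1\,(A v_h,v_h)\le (Bv_h,v_h)\le c_2\,(A v_h,v_h)\qquad\text{for all }v_h\in V_h,
\]
with $c_1,c_2$ independent of $h$; since the extremal generalized eigenvalues of the pencil $(B,A)$ are then pinched between $c_1$ and $c_2$, this immediately yields $\kappa(BA)\lesssim c_2/c_1\lesssim 1$. Throughout I would replace $\|\cdot\|_{\widetilde{R}^{-1}}$ by the equivalent $D$-norm via Assumption~\ref{as:smoother-2}, and $\|\cdot\|_{B_H}$ by the energy norm via Lemma~\ref{lm:basic-2}(iv), so that the functional minimized in \eqref{eq:canonical} becomes, up to uniform constants, $\normA{v_H}^2+\|v_h-(I-R^TA)v_H\|_D^2$.

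\emph{Lower bound.} Fix an arbitrary $v_H\in V_H$ and set $r=v_h-(I-R^TA)v_H$, so that $v_h=(I-R^TA)v_H+r$. Using the $A$-norm contractivity of the smoother, which by $A$-self-adjointness also gives $\normA{I-R^TA}=\normA{I-RA}<1$, together with Lemma~\ref{lm:basic-2}(ii), I would estimate
\[
\normA{v_h}\le \normA{(I-R^TA)v_H}+\normA{r}\le \normA{v_H}+\|r\|_{\widetilde{R}^{-1}}.
\]
Squaring and invoking Lemma~\ref{lm:basic-2}(iv) to pass from $\normA{v_H}^2$ to $\|v_H\|_{B_H}^2$ shows $\normA{v_h}^2\lesssim \|v_H\|_{B_H}^2+\|r\|_{\widetilde{R}^{-1}}^2$ for \emph{every} $v_H$; taking the minimum over $v_H$ and comparing with \eqref{eq:canonical} gives $(Av_h,v_h)=\normA{v_h}^2\lesssim (Bv_h,v_h)$. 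This is the boundedness half and uses no approximation property.

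\emph{Upper bound (constructive).} Here I would exhibit a single competitor in the minimum, namely $v_H=\Pi_H v_h$. For the first term, Lemma~\ref{lm:basic-2}(iv) and (i) give $\|\Pi_H v_h\|_{B_H}^2\lesssim \normA{\Pi_H v_h}^2\lesssim \normA{v_h}^2$. For the second term I would split
\[
v_h-(I-R^TA)\Pi_H v_h = (v_h-\Pi_H v_h) + R^TA\,\Pi_H v_h,
\]
and bound the two pieces by the triangle inequality in $\|\cdot\|_{\widetilde{R}^{-1}}$. The first piece is controlled by the approximation property of Lemma~\ref{lm:basic} together with Assumption~\ref{as:smoother-2}, since $\|v_h-\Pi_H v_h\|_{\widetilde{R}^{-1}}\eqsim\|v_h-\Pi_H v_h\|_D\lesssim\normA{v_h}$. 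The second piece is precisely the smoothed residual, controlled by Lemma~\ref{lm:basic-2}(iii): using the identity $(\widetilde{R}^{-1}R^TA u,R^TA u)=(R\widetilde{R}^{-1}R^TA u,A u)$ one obtains $\|R^TA\,\Pi_H v_h\|_{\widetilde{R}^{-1}}^2\lesssim\normA{\Pi_H v_h}^2\lesssim\normA{v_h}^2$. Summing the two terms, the chosen $v_H$ makes the functional in \eqref{eq:canonical} $\lesssim\normA{v_h}^2=(Av_h,v_h)$, which gives $(Bv_h,v_h)\lesssim (Av_h,v_h)$.

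The main obstacle is the upper (stability) bound: it is here that the interaction between the coarse and fine scales must be resolved, and it rests entirely on the approximation estimate of Lemma~\ref{lm:basic}---that the interpolation defect $v_h-\Pi_H v_h$ is controlled in the diagonal (equivalently $\widetilde{R}^{-1}$) norm by the energy---coupled with the smoothing bound of Lemma~\ref{lm:basic-2}(iii). The lower bound, by contrast, is routine. I would finally remark that all constants depend only on $\kappa_\star,\kappa^\star$, on the maximum vertex degree (entering through \eqref{eq:inverse}), and on the smoother parameters, hence are independent of $h$, which completes the proof that $\kappa(BA)\lesssim 1$.
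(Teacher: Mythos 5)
Your proposal is correct and follows essentially the same route as the paper: the lower bound via the triangle inequality, the smoother contraction, Lemma~\ref{lm:basic-2}(ii),(iv) and minimization in the canonical form \eqref{eq:canonical}, and the upper bound by inserting the interpolant $\Pi_H v_h$ as competitor and invoking Lemma~\ref{lm:basic}, Lemma~\ref{lm:basic-2}(i),(iii),(iv) and Assumption~\ref{as:smoother-2}. The only (harmless) difference is that you state explicitly the identity $\normA{I-R^TA}=\normA{I-RA}$, which the paper uses implicitly.
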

\begin{proof} In this proof, we use the
  Assumptions~\ref{as:smoother-1}-\ref{as:smoother-2} and
  Lemma~\ref{lm:basic} and Lemma~\ref{lm:basic-2}.
We first show the lower bound. For any $v_h\in V_h$
and $v_H\in V_H$ we have
\begin{eqnarray*}
\|v_h\|_A^2 &\le& 2\|v_h-(I-R^TA)v_H\|_A^2 + 2\|(I-R^TA)v_H\|_A^2\\
&\le & 2\|v_h-(I-R^TA)v_H\|_{\widetilde{R}^{-1}}^2 + 2\|v_H\|_{A}^2
\quad \mbox{[Lemma~\ref{lm:basic-2}(ii)]}\\
& \lesssim &
[\|v_h-(I-R^TA)v_H\|_{\widetilde{R}^{-1}}^2 +
\|v_H\|_{B_H}^2]. \quad \mbox{[Lemma~\ref{lm:basic-2}(iv)]}
\end{eqnarray*}
Taking the minimum over all $v_H\in V_H$ 
and using~\eqref{eq:canonical} then shows that
\[
(A v_h ,v_h) \lesssim (B v_h,v_h).
\]
For the upper bound,  we choose in \eqref{eq:canonical}
$v_H=I_H^hv_h$. We have
\begin{eqnarray*}
(Bv_h,v_h) & = & 
\min_{v_H\in V_H} \left(\|v_H\|^2_{B_H} + \|v_h-(I-R^TA)v_H\|^2_{\widetilde{R}^{-1}}\right)
\\
&\le & \|I_H^hv_h\|^2_{B_H} + \|v_h-I_H^h v_h + R^TAI_H^h v_h\|^2_{\widetilde{R}^{-1}}\\
& \lesssim & \|I_H^hv_h\|_A^2 + \|v_h-I_H^hv_h\|_{\widetilde{R}^{-1}}^2 + \|R^TA I_H^h v_h\|_{\widetilde{R}^{-1}}^2
\quad \mbox{[Lemma~\ref{lm:basic-2}(iv)]}\\
& \lesssim & \|I_H^hv_h\|_A^2 + \|v_h-I_H^hv_h\|_{D}^2 + \|I_H^hv_h\|_{A}^2
\quad \mbox{[Assumption~\ref{as:smoother-2}, Lemma~\ref{lm:basic-2}(iii)]}\\
& \lesssim & \|v_h\|_A^2 + \|v_h\|_A^2 + \|v_h\|_{A}^2
\quad \mbox{[Lemma~\ref{lm:basic}, Lemma~\ref{lm:basic-2}(i)]}\\
& \lesssim & \|v_h\|_A^2,
\end{eqnarray*}

This shows the desired estimate and the  proof is complete. 
\end{proof}

\begin{remark}
  We remark, that a multilevel extension of the results presented here
  is possible via the auxiliary (fictitious) space framework (since
  the bilinear forms are modified). We refer to~\cite{1996XuJ-aa,1992NepomnyaschikhS-aa} and
  \cite[Section~2]{2007HiptmairR_XuJ-aa}) for the relevant techniques
  that allow the extension of the results presented here to the
  multilevel case.
\end{remark}

%%%%%%%%%%%%%%%%%%%%%%%%%%%%%%%%%%%%%%%%% 
\section{Numerical Results}
\label{sec:numerics}

We are interested in approximating the solution of the elliptic
problem \eqref{erm2} on the unit square, where the right hand side is
chosen so that the analytical solution is given by
\begin{equation*}
  u(x_1,x_2)=x_1 (x_2 - x_2^2)\exp(x_2)
  \cos\left(\frac{\pi x_1}{2}\right).
\end{equation*}

We start from the initial grids of levels $\CL=1,2$ shown in Figure~
\ref{fig:initial_grids}~(top), that
we denote by $Tria$, $Quad$ and $Hex$ meshes, respectively.  Starting
from these initial grids, we test our two-level solver on a sequence
of finer grids constructed by employing the refinement strategy
described in Section~\ref{sec:two-level}.  More precisely, at each
further step of refinement $\ell=1,2,...$ we consider a uniform
refinement of the grid at the previous level obtained employing the
refinement strategy described in Section~\ref{sec:two-level},
cf. Figure~\ref{fig:initial_grids}~(bottom) for
$\ell=1$, \emph{i.e.}, the meshes obtained after one level of refinement.
\begin{figure}
\centering
\subfigure[Initial level $\CL=1$, fine level $\ell=0$]{
 \includegraphics[width=0.15\textwidth]{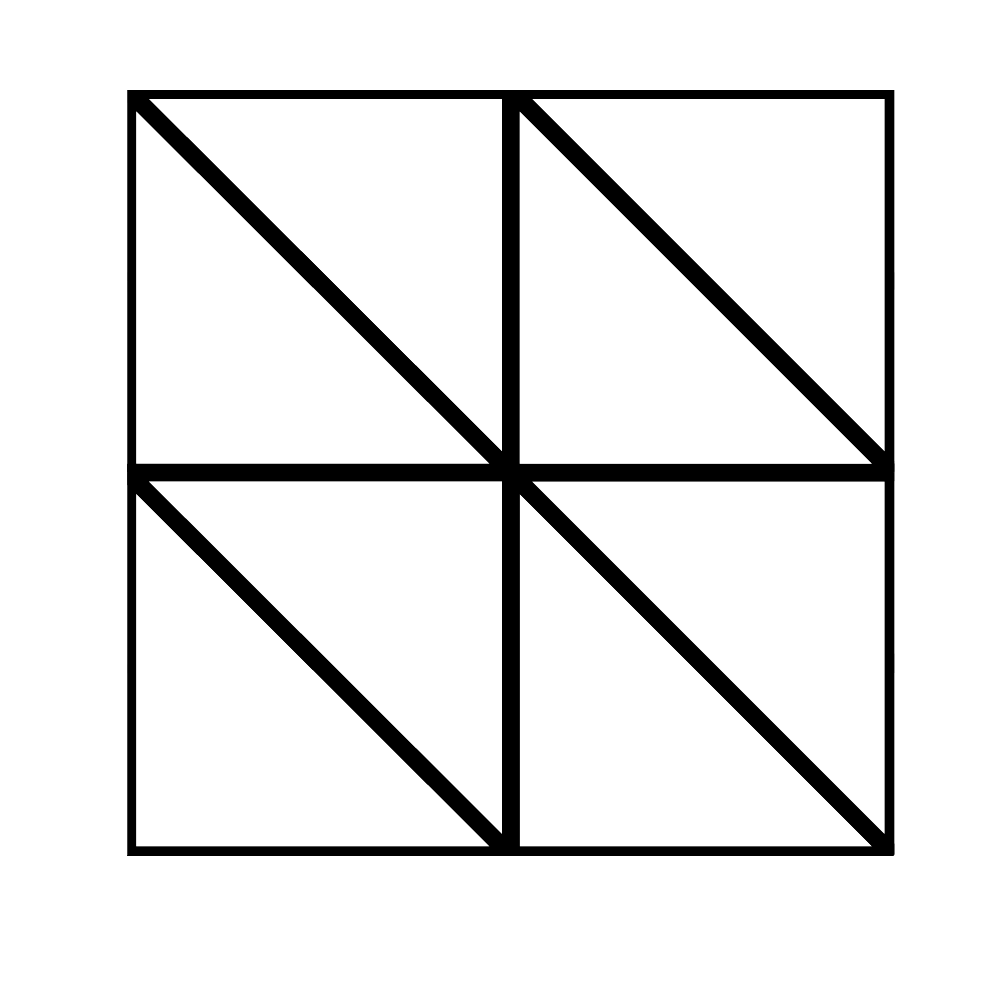}
    \includegraphics[width=0.15\textwidth]{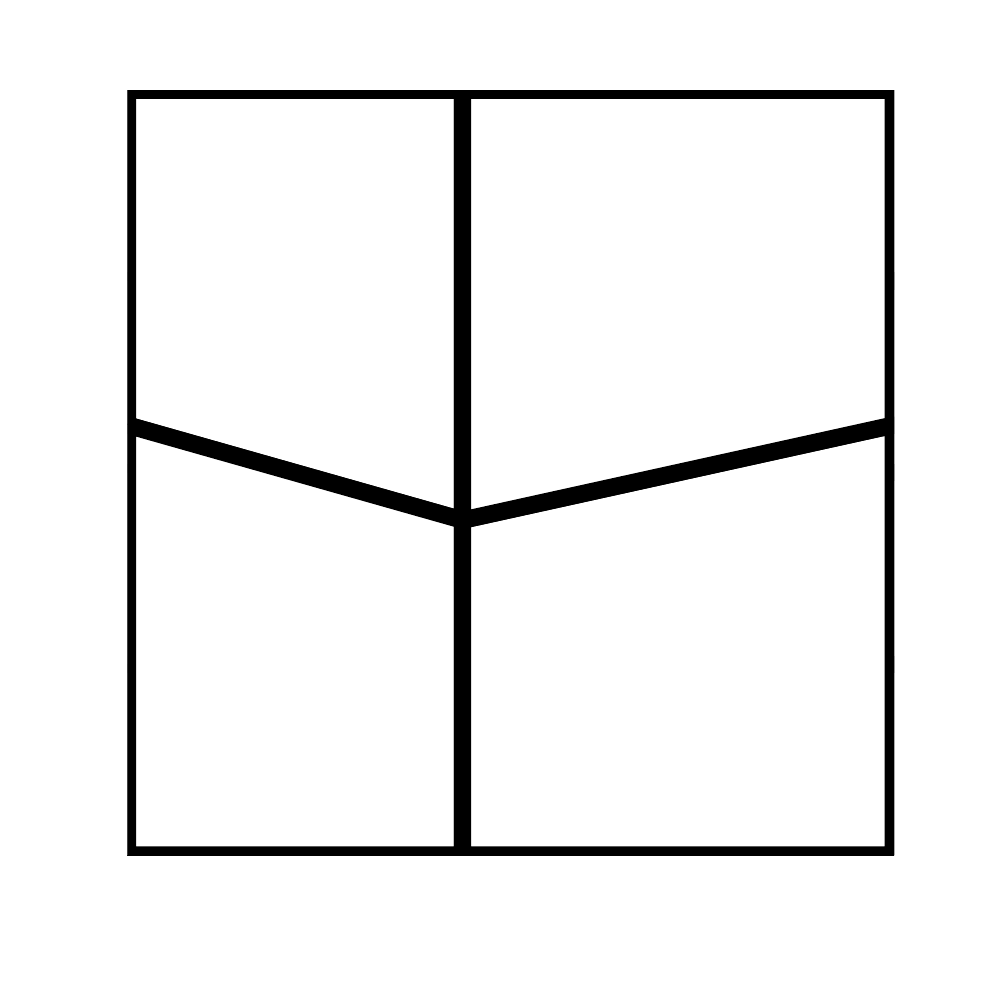}
    \includegraphics[width=0.15\textwidth]{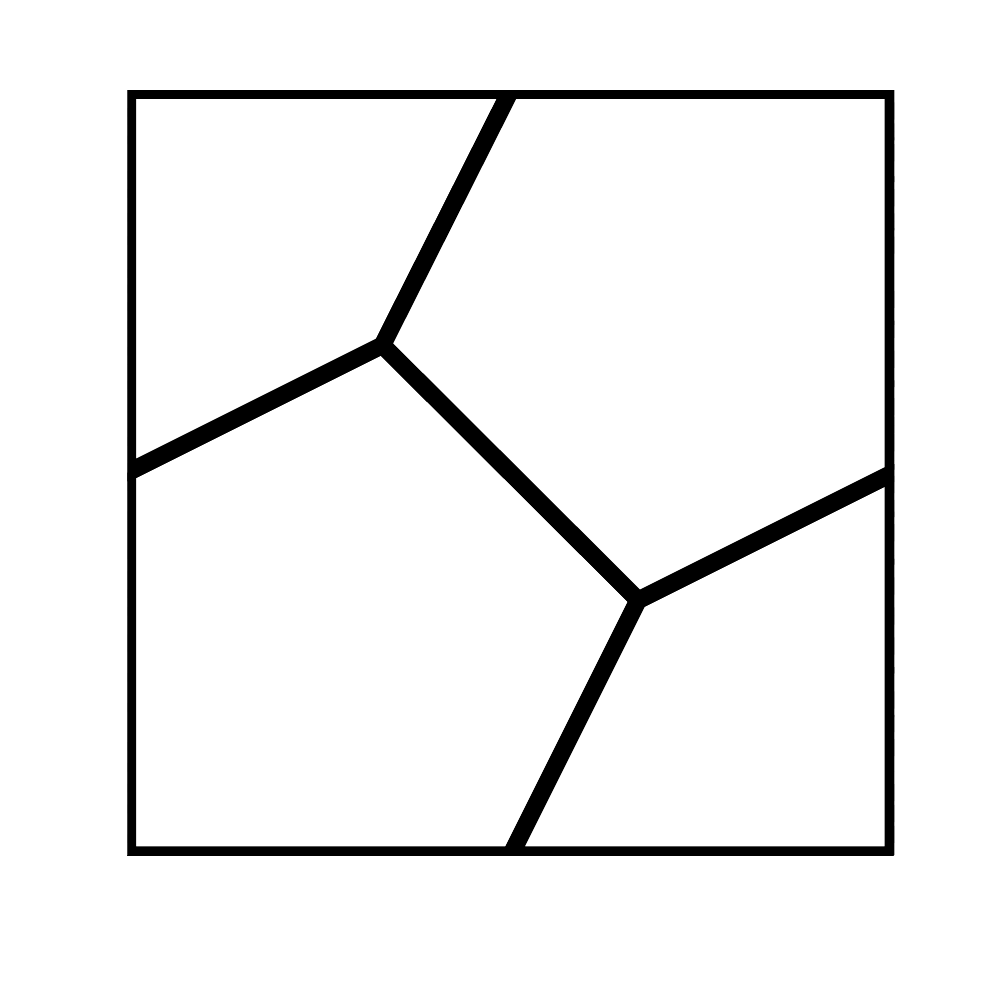}
  }
  \subfigure[Initial level $\CL=2$, fine level $\ell=0$]{
    \includegraphics[width=0.15\textwidth]{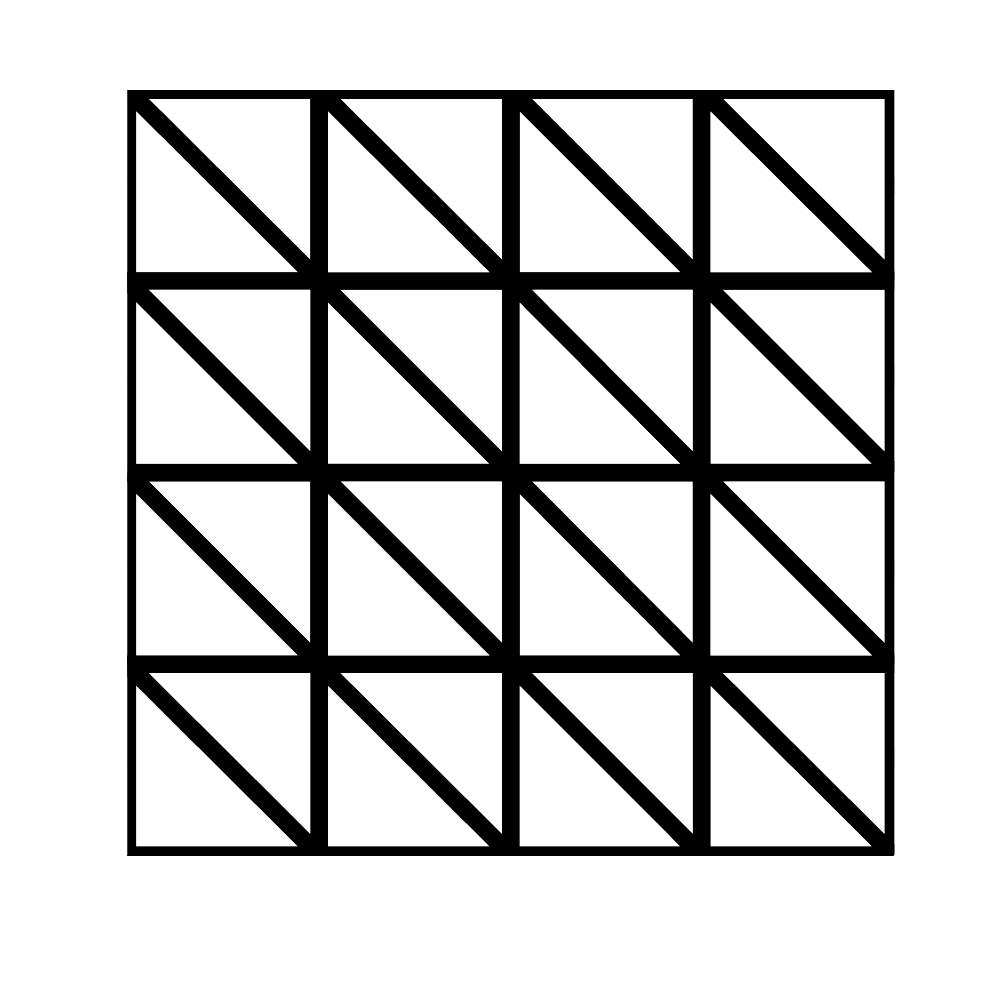}
    \includegraphics[width=0.15\textwidth]{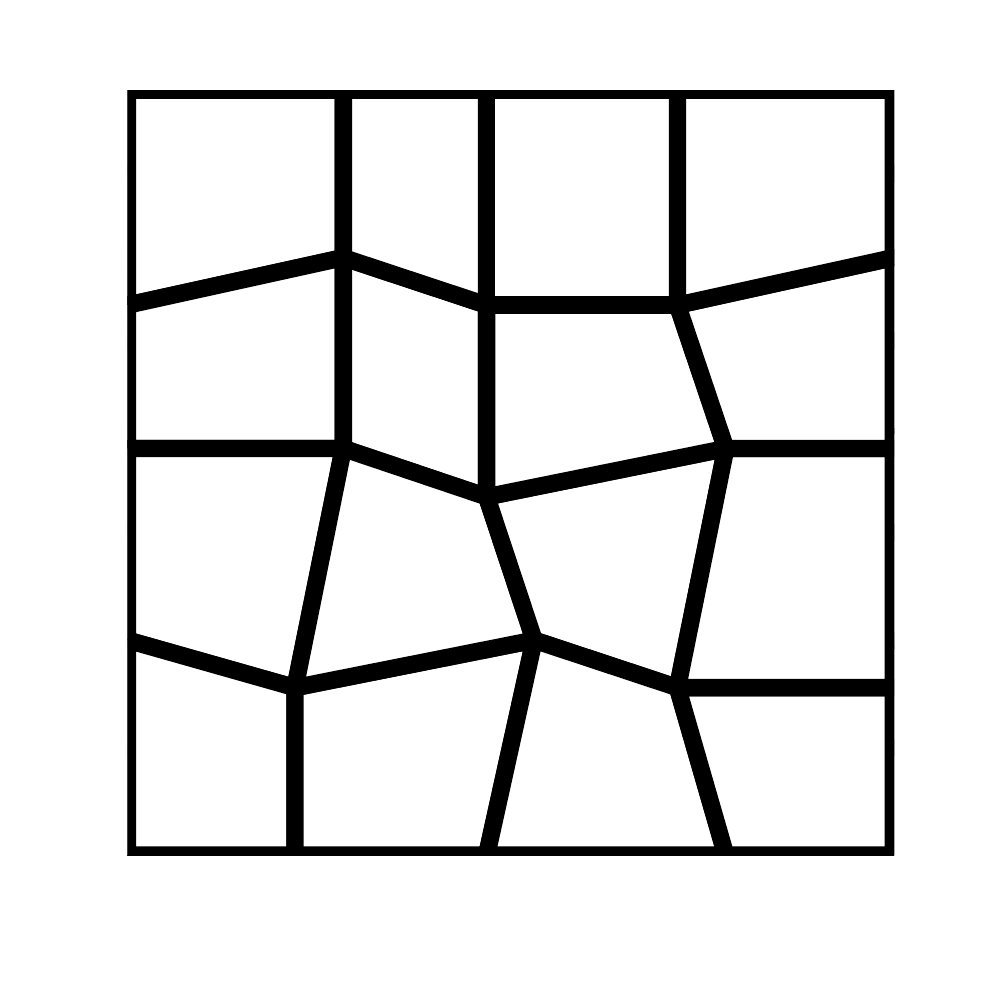}
    \includegraphics[width=0.15\textwidth]{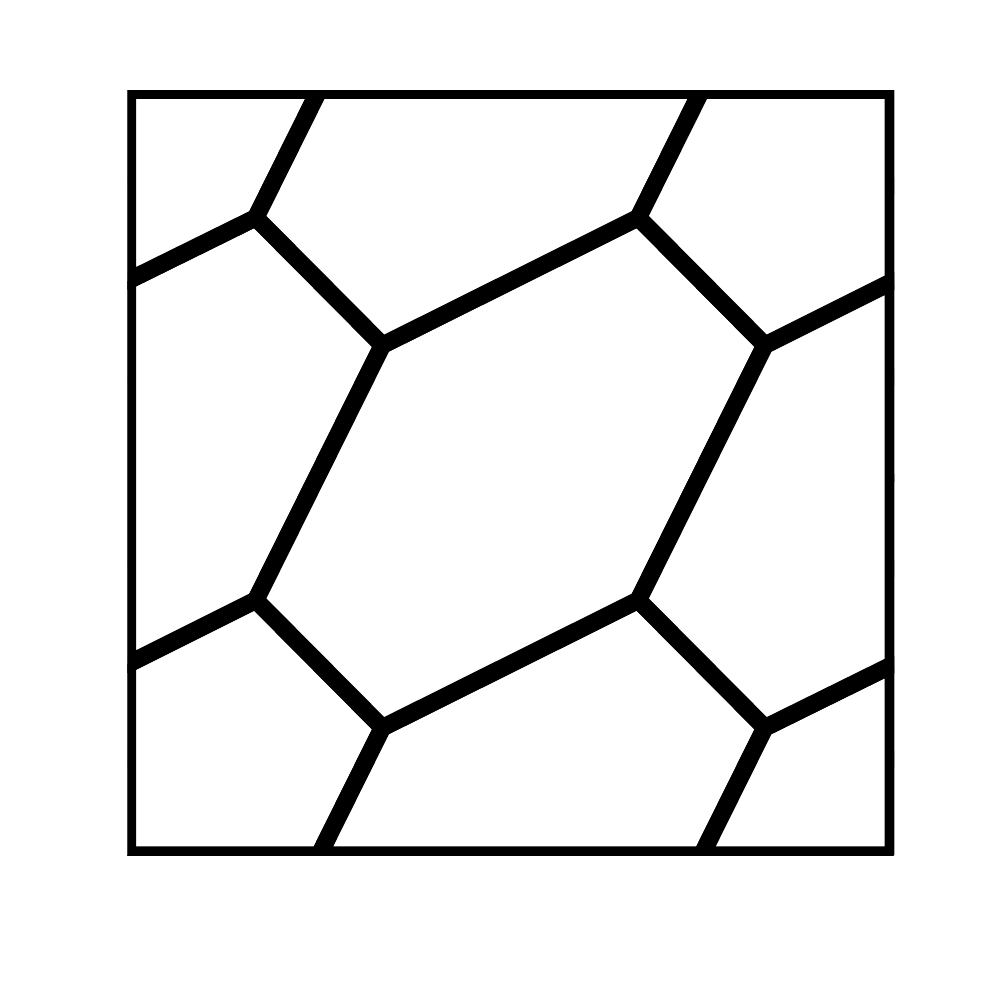}
  }
  \subfigure[Initial level $\CL=1$, fine level $\ell=1$]{
    \includegraphics[width=0.15\textwidth]{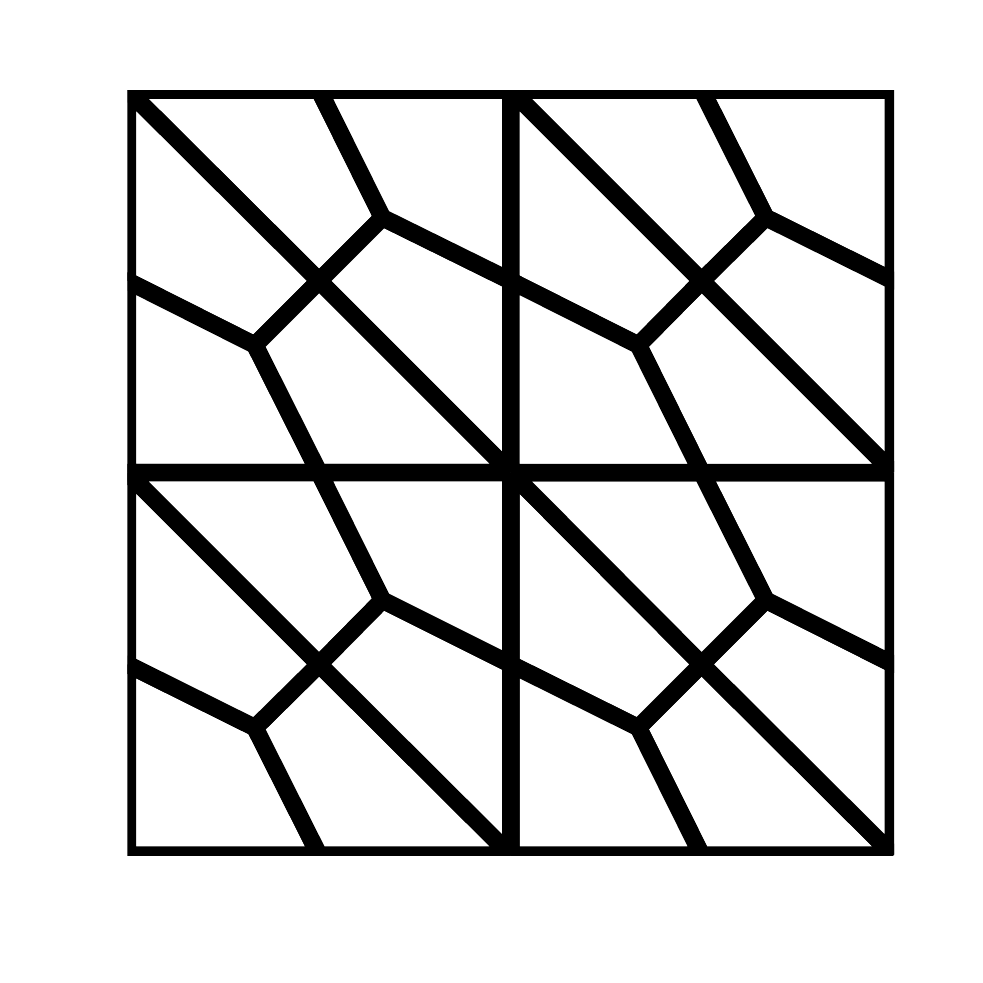}
    \includegraphics[width=0.15\textwidth]{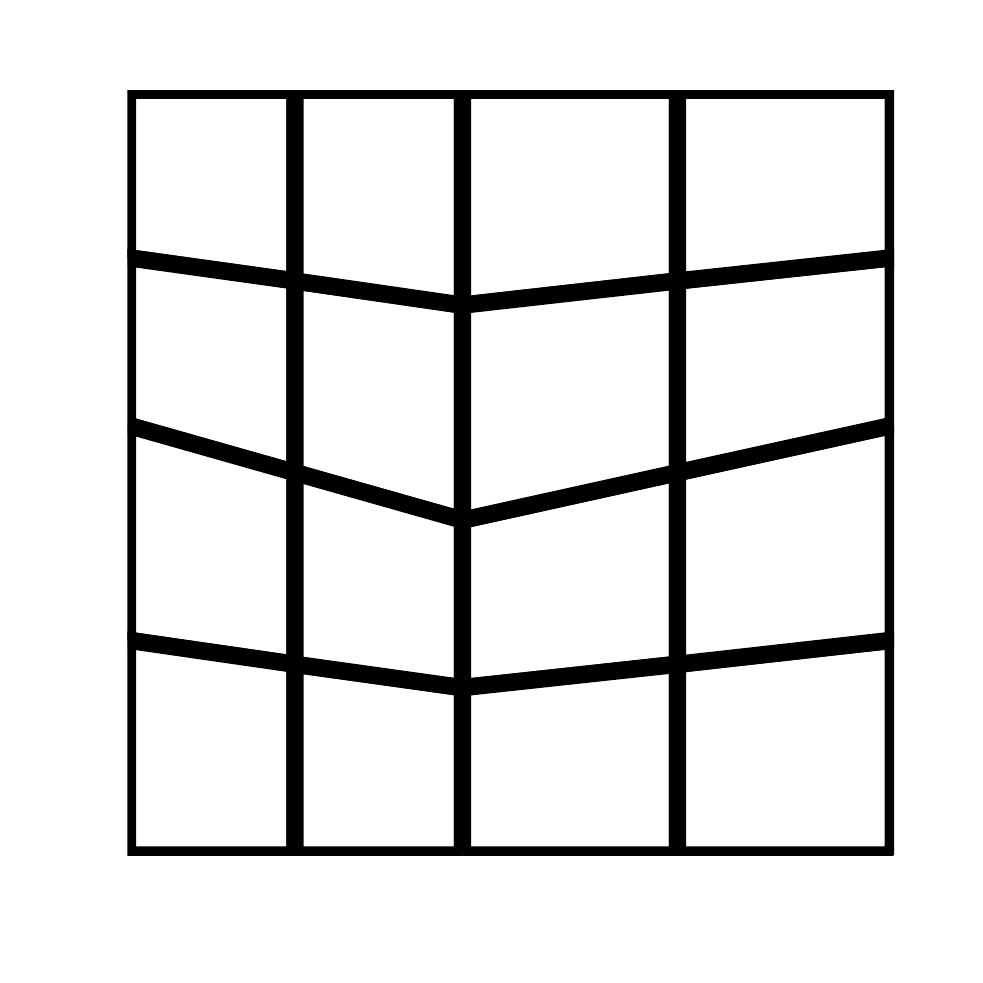}
    \includegraphics[width=0.15\textwidth]{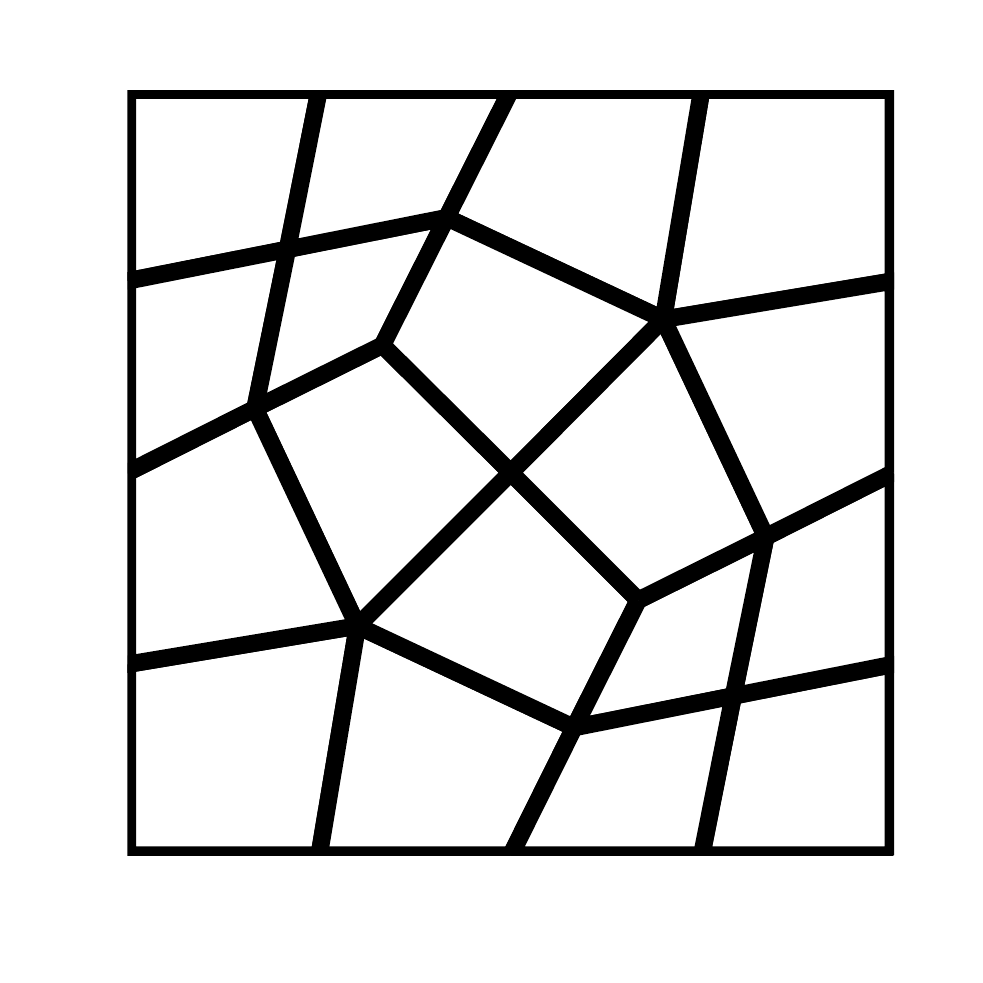}
  }
  \subfigure[Initial level $\CL=2$, fine level $\ell=1$]{
    \includegraphics[width=0.15\textwidth]{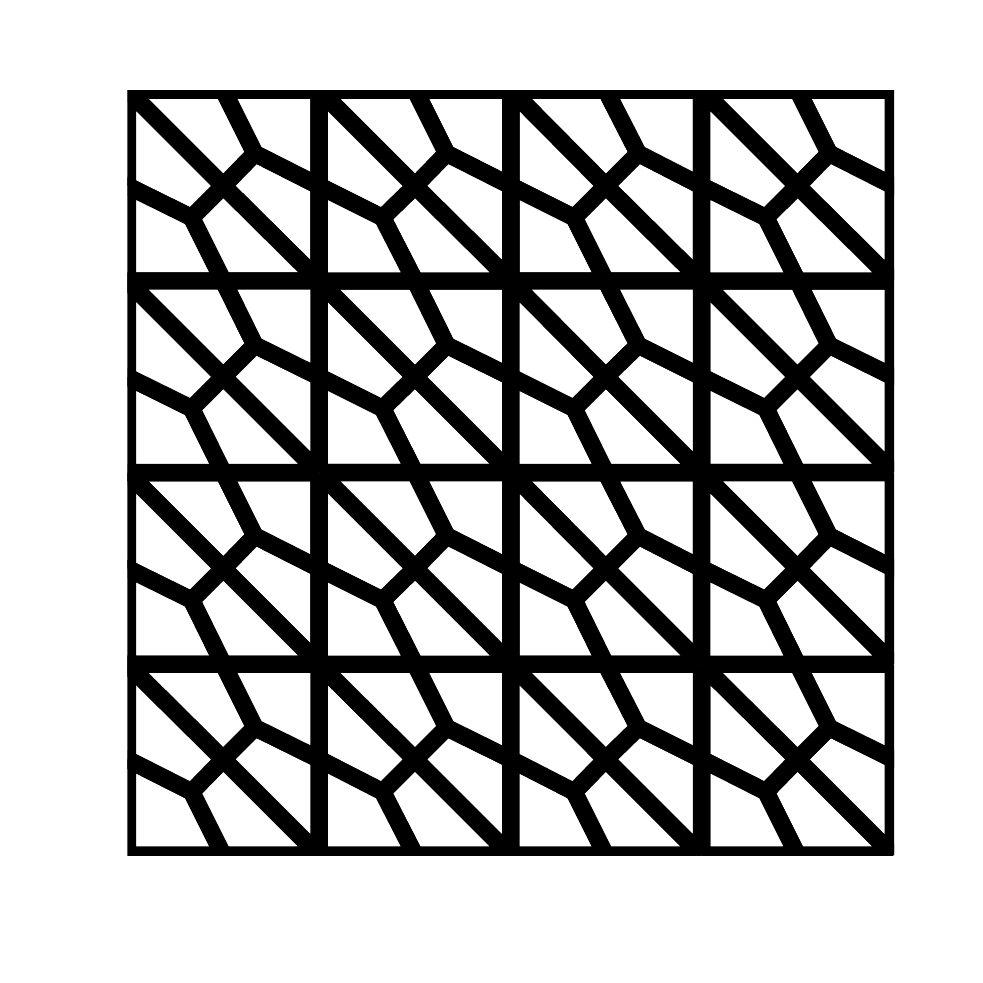}
    \includegraphics[width=0.15\textwidth]{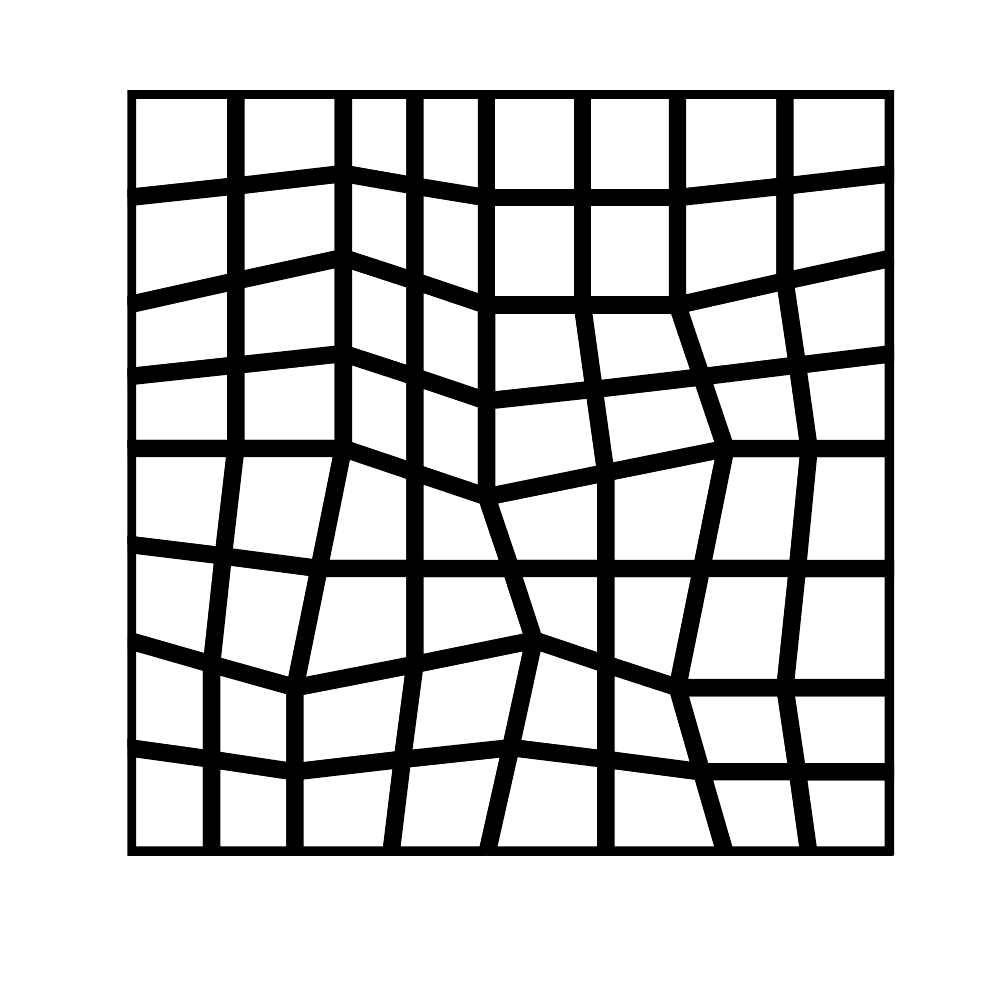}
    \includegraphics[width=0.15\textwidth]{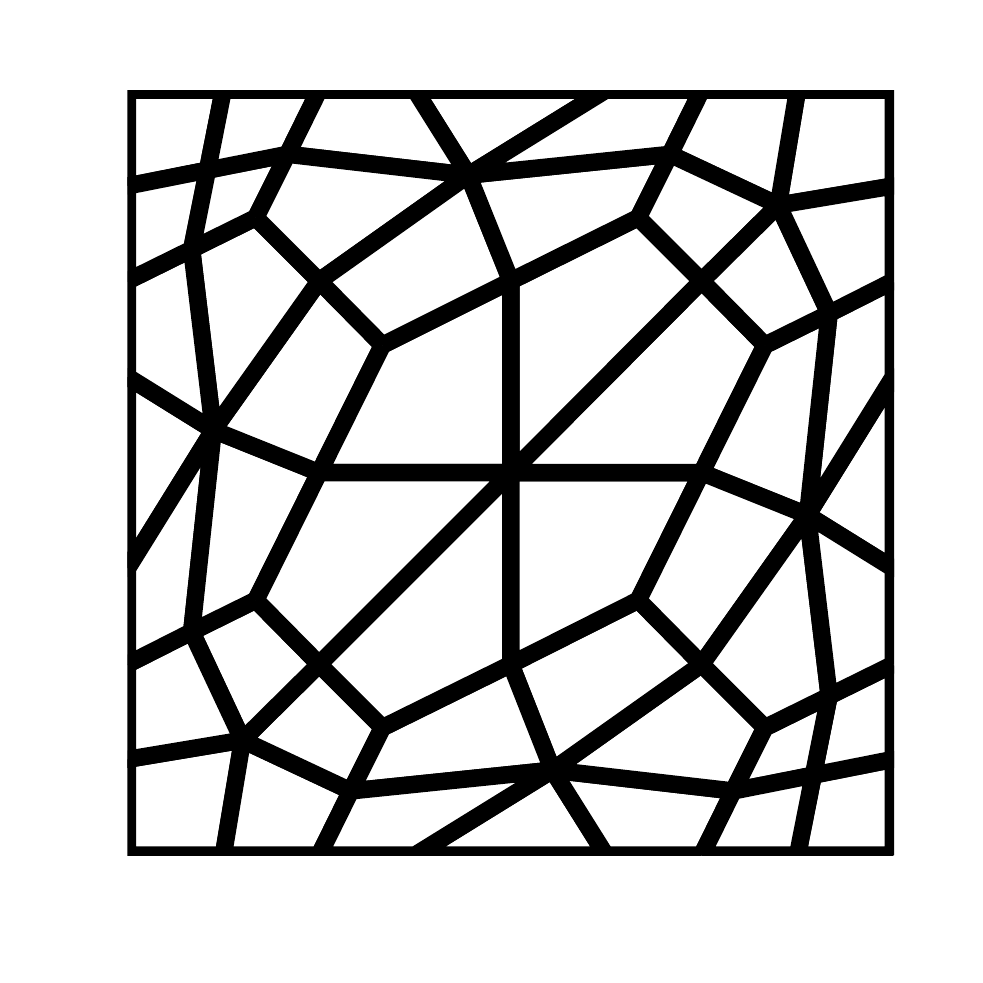}
  }
\caption{Top: $Tria$, $Quad$ and $Hex$ meshes with initial levels $\CL=1$ (left) and  $\CL=2$ (right) and fine level $\ell=0$. Bottom: corresponding grids obtained after a uniform refinement ($\ell=1$) employing the refinement strategy of Section~\ref{sec:two-level}.}
\label{fig:initial_grids}
\end{figure}%
%%%%%%%%%%%%%%%%%%%%%%%%%
%%%%%%%%%%%%%%%%%%%%%%%
As pre-smoother we employ $\nu$ steps of the Gauss-Seidel iterative algorithm, while a direct solver is employed to solve the coarse problem. All simulations are performed by using the null vector as initial guess, and we use as stopping criterium $\norm{{\bm r}^{(k)}} \leq 10^{-9} \norm{{\bm b}}$, being ${\bm r}^{(k)}$ the residual at the $k$-th iteration,  ${\bm b}$ the right-hand side of the linear system, and $\norm{\cdot}$ the Euclidean norm.\\

%%%%%%%%%%%%%%%%%%%%%%%%%%%%%%%%%%%%%%%% 
\begin{table}[!h]
  \begin{center}
    \begin{tabular}{c |c c c c |c c c c|c c c c}
      & it. & $\rho$ & 
      $\CK(\bm A)$  & rate
      & it. & $\rho$ & 
      $\CK(\bm A)$  & rate
      & it. & $\rho$ & 
      $\CK(\bm A)$  & rate\\
      \hline
      $\ell=1$  
      &  18  & 0.3 & 1.1e+1&  -  
      & 9     & 0.1 &  5.9e+0 & - 
      & 7    & 0.1 &  6.9e+0 & -  \\
      $\ell= 2$  	
      &  13  & 0.2 & 4.9e+1  & 2.2  
      & 8     & 0.1 & 2.6e+1  & 2.1 
      & 8     & 0.1 & 3.2e+1  & 2.2  \\ 
      $\ell= 3$ 	
      &18 & 0.1 & 2.2e+2 & 2.1  
      & 8  & 0.1 & 1.1e+2 & 2.0 
      &10 & 0.1 & 1.4e+2 & 2.1\\  
      $\ell= 4$  	
      & 22& 0.4 & 9.2e+2 & 2.1  
      &9   & 0.1 & 4.2e+2 & 2.0 
      &11 & 0.1 & 6.2e+2 & 2.1\\  
      $\ell= 5$  	
      &23 & 0.4 & 3.9e+3 & 2.0  
      & 9  & 0.1 & 1.7e+3 & 2.0 
      &12 & 0.2 & 1.1e+4 & 2.1 \\
      \hline
      & \multicolumn{4}{c|}{$Tria$ grids}
      & \multicolumn{4}{c|}{$Quad$ grids}
      & \multicolumn{4}{c}{$Hex$ grids}
    \end{tabular}
  \end{center}
  \caption{Iteration counts of the two-level algorithm and computed convergence factor $\rho$ for different fine refinement level $\ell$ starting from the initial grids of in Figure~\ref{fig:initial_grids} with
$\CL=1$. For completeness, the condition number of the stifness matrix $\CK(\bm A)$ and its growth rate are also reported.  Number of pre-smoothing steps $\nu=2$.}
  \label{tab:iterations_L=1_2smoothing}
\end{table}
%%%%%%%%%%%%%%%%%%%%%%%%%%%%%%%% 
In Table~\ref{tab:iterations_L=1_2smoothing} we report,  starting from the initial grids shown in Figure~\ref{fig:initial_grids} with $\ell=0$, and $\CL=1$, the iteration counts of our two-level algorithm when varying the fine refinement level $\ell$. This set of experiments has been obtained with $\nu=2$ pre-smoothing steps. We clearly observe that  our solver seems to be robust as the mesh size goes to zero: indeed the iteration counts are almost independent of the size of the problem. 
In Table~\ref{tab:iterations_L=1_2smoothing} we also show the computed convergence factor 
\begin{equation}\label{eq:conv_factor}
  \rho  = \textrm{exp}\left(\frac{1}{n} \log \frac{\norm{{\bm r}^{(n)}}}{\norm{{\bm r}^{(0)}}}\right) ,
\end{equation}
where $n$ is the number of iterations needed to achieve convergence.  Finally, for completeness, we have also computed the condition number of the stiffness matrix $\kappa(\bm A)$ as well as its growth rate (cf. Table~\ref{tab:iterations_L=1_2smoothing}). As expected, we can clearly observe that the condition number increases quadratically as the mesh is refined.  \\

%%%%%%%%%%%%%%%%%%%%%%%%%%%%%%%%%%%%%%%%% 
\begin{table}[!htbp]
  \begin{center}
    \begin{tabular}{c |c c c c |c c c c|c c c c}
      & it. & $\rho$ & 
      $\CK(\bm A)$  & rate
      & it. & $\rho$ & 
      $\CK(\bm A)$  & rate
      & it. & $\rho$ & 
      $\CK(\bm A)$  & rate\\
      \hline
      $\ell=1$  
      &  16  & 0.3 & 4.3e+1&  -  
      & 8     & 0.1 & 2.7e+1 & - 
      & 7    & 0.1 &  1.3e+1 & -  \\
      $\ell= 2$  	
      &  14  & 0.2 & 2.0e+1  & 2.2  
      & 9     & 0.1 & 1.1e+2  & 2.1 
      & 14     & 0.2 & 6.5e+1  & 2.4  \\ 
      $\ell= 3$ 	
      &17 & 0.2 & 8.6e+2 & 2.1  
      &10  & 0.1 &4.6e+2 & 2.0 
      &18 & 0.3 & 3.3e+2 & 2.4\\  
      $\ell= 4$  	
      & 21& 0.4 & 3.7e+3 & 2.1  
      &10 & 0.1& 1.9e+3 & 2.0 
      &22 & 0.4 & 2.1e+3 & 2.6\\  
      \hline
      & \multicolumn{4}{c|}{$Tria$ grids}
      & \multicolumn{4}{c|}{$Quad$ grids}
      & \multicolumn{4}{c}{$Hex$ grids}
    \end{tabular}
  \end{center}
  \caption{Iteration counts of the two-level algorithm and computed
    convergence factor $\rho$ for different fine refinement levels
    $\ell$ starting from the coarse grids of in
    Figure~\ref{fig:initial_grids} with $\CL=2$. For completeness,
    the condition number of the stifness matrix $\CK(\bm A)$ and its growth rate are also reported.  Number of pre-smoothing steps $\nu=2$.}
  \label{tab:iterations_L=2_2smoothing}
\end{table}
%%%%%%%%%%%%%%%%%%%%%%%%%%%%%%%% 
We have repeated the same set of experiments starting from the initial
grids depicted in Figure~\ref{fig:initial_grids} with
$\CL=2$ and $\ell=0$. The computed results are reported in
Table~\ref{tab:iterations_L=2_2smoothing}. Notice that, in this case,
on $Hex$-type grids the condition number seems to grows slightly
faster than expected.

\noindent
Next, we address the influence of the number of smoothing steps of the
performance of our two-level solver. In Table~
\ref{tab:quad_iterations_smooothing_steps} we report the iteration
counts when increasing the number of pre-smoothing steps
$\nu=3,4,5$. The results shown in
Table~\ref{tab:quad_iterations_smooothing_steps} have been obtained
starting from the initial grids of
Figure~\ref{fig:initial_grids} with $\CL=1$ and  $\ell=0$; the corresponding
ones obtained with the initial grids of
Figure~\ref{fig:initial_grids}, $\CL=2$ and $\ell=0$ are completely
analogous and are not reported here, for the sake of brevity. From the
iteration counts reported in
Table~\ref{tab:quad_iterations_smooothing_steps} we can conclude that
\emph{(i)} in all the cases considered, our two-level method is robust
as the mesh size is refined; \emph{(ii)} as expected, the performance
of the algorithm improves as the number of smoothing steps increases.
% ---------------------------------------
\begin{table}[!htb]
  \begin{center}
    \begin{tabular}{c |r r r|r r r|r r r}
      &\multicolumn{1}{c}{$\nu=3$}
      &\multicolumn{1}{c}{$\nu=4$}
      &\multicolumn{1}{c|}{$\nu=5$}
      &\multicolumn{1}{c}{$\nu=3$}
      &\multicolumn{1}{c}{$\nu=4$}
      &\multicolumn{1}{c|}{$\nu=5$}
      &\multicolumn{1}{c}{$\nu=3$}
      &\multicolumn{1}{c}{$\nu=4$}
      &\multicolumn{1}{c}{$\nu=5$}\\
      \hline
      $\ell=1$  &
      11 & 9 & 8 &
      7  & 6  &  5&
      6  & 6 & 5   \\
      $\ell=2$ & 
      10 & 9 & 8 &
      7  & 6 &  6 &
      7  &  6 & 6\\
      $\ell=3$ &
      11 & 11 & 9&
      7  & 6 &  6  &
      8  &  7 & 7 \\
      $\ell=4$ &
      15 & 12 & 10&
      7   & 6 &  6 &
      8  &  8 & 7 \\
      $\ell=5$ &
      16 & 13 & 11&
      7   & 6  &  6 &
      9  &  8 & 7 \\
      \hline
      & \multicolumn{3}{c|}{$Tria$ grids}
      & \multicolumn{3}{c|}{$Quad$ grids}
      & \multicolumn{3}{c}{$Hex$ grids}
    \end{tabular}
  \end{center}
  \caption{Iteration counts as a function of the number of pre-smoothing steps $\nu=3,4,5$ and for different fine refinement levels $\ell$ starting from the initial grids of  Figure~\ref{fig:initial_grids}, $\CL=1$.  }
  \label{tab:quad_iterations_smooothing_steps}
\end{table}
\newline

Finally, we demonstrate numerically that our scheme also provides a
uniform preconditioner, that is the number of PCG iterations needed to
achieve convergence up to a (user-defined) tolerance is uniformly
bounded independently of the number of degrees of freedom whenever CG
is accelerated by the preconditioner described in
Section~\ref{sec:two-level}.  In Table \ref{tab:prec-PCG} we report
the PCG iteration counts as a function of the number of the fine level
$\ell=1,2,3,4,5$ starting from the initial grids shown in
Figure~\ref{fig:initial_grids} ($\CL=1,2$, $\ell=0$) for $Hex$-type
grids. For completeness, we also report the computed convergence
factor $\rho$ (second and fifth columns) and the correspondindg CG
iteration counts needed to solve the unpreconditioned system (third
and sixth columns).  It is clear that employing our preconditioner
leads to a uniformly bounded number of iterations (independent of the
characteristic size of the underling partition). On the other hand,
the iteration counts needed to solve the unpreconditioned systems
grows linearly as the mesh size goes to zero.
% ---------------------------------------
\begin{table}[!htb]
\centering
\begin{tabular}{c |c cc|c cc}
&\multicolumn{1}{c}{PCG it.}
&\multicolumn{1}{c}{$\rho$}
&\multicolumn{1}{c|}{CG it.}
&\multicolumn{1}{c}{PCG it.}
&\multicolumn{1}{c}{$\rho$}
&\multicolumn{1}{c}{CG it.}\\
\hline
$\ell=1$  & 10 & 0.25& 19  & 11 & 0.27 & 30\\
$\ell=2$  &12 &  0.29&42  & 10 & 0.25 & 66\\
$\ell=3$  &10 &  0.23&92  & 10& 0.23 & 133\\
$\ell=4$  &10 &  0.23& 210  & 10 &0.24 &  324\\
$\ell=5$  &10 &  0.25& 533  &  - & - & - \\
\hline
& \multicolumn{3}{c|}{$\CL=1$}
& \multicolumn{3}{c}{$\CL=2$}\\
\end{tabular}
\caption{PCG iteration counts and computed convergence factor $\rho$ as a function of the number of level $\ell$ starting from the initial grids of  Figure~\ref{fig:initial_grids}, $\CL=1,2$, $Hex$ grids. For comparison, the CG iteration counts needed to solve the unpreconditioned systems are also reported. }
\label{tab:prec-PCG}
\end{table}

%%%%%%%%%%%%%%%%%%%%% 
\section{Conclusions}
We have proposed and analyzed a two level preconditioner for  mimetic
finite difference discretizations of elliptic
equations.  
Our preconditioner use inexact coarse grid solver (non-inherited coarse
grid bilinear form) and results in a optimal
method with sparser coarse grid operators. We proved that
the condition number of the preconditioned system is uniformly
bounded. We also implemented the preconditioner and verified
numerically the theoretical results. 

\section{Acknowledgements}
Part of this work was completed while the third author was visiting
MOX at Politecnico di Milano in 2013.  Thanks go to the MOX for the
hospitality and support.  
The work of the first and second author has been partially 
founded by the 2013 GNCS project \emph{``Aspetti emergenti nello studio di strategie adattative per problemi differenziali''}.
The research of the third author was supported in part by NSF  
DMS-1217142, NSF DMS-1418843,  and Lawrence Livermore National Laboratory through subcontract B603526.

\FloatBarrier

\bibliographystyle{abbrv}
\bibliography{MFD}
\end{document}